\documentclass[10pt,a4paper,reqno]{amsart}

\usepackage{amsmath}
\usepackage{amsfonts}
\usepackage{amssymb}
\usepackage{amsthm}
\usepackage{mathrsfs}
\usepackage[shortlabels]{enumitem}
\usepackage{graphicx}
\usepackage{color}
\usepackage{dsfont}
\usepackage[latin1]{inputenc}
\usepackage{MnSymbol}
\usepackage{enumitem}
\usepackage{extpfeil}
\usepackage{mathtools}
\usepackage{url}
\usepackage[margin=1.25in]{geometry}
\usepackage{fancyhdr}
\usepackage[all,cmtip]{xy}
\usepackage{hyperref}
\usepackage{tikz}
\usepackage{thm-restate}

\fancyhf{}
\fancyhead[C]{\thepage}
\pagestyle{fancy}
\headheight = 13pt

\numberwithin{equation}{section}


\newcommand{\R}{{\mathds R}}

\newcommand{\Z}{{\mathds Z}}

\def\ker{{\rm{ker}}}

\theoremstyle{plain}
\newtheorem{theorem}{Theorem}[section]

\newtheorem{corollary}[theorem]{Corollary}

\newtheorem{lemma}[theorem]{Lemma}

\newtheorem*{question*}{Question}

\theoremstyle{definition}
\newtheorem{remark}[theorem]{Remark}
\newtheorem*{acknowledgements*}{Acknowledgements}

\newtheorem{definition}[theorem]{Definition}
\newtheorem*{notation*}{Notation}
\newtheorem*{convention*}{Convention}


\title{From the second BNSR invariant to Dehn functions of coabelian subgroups}
\author{Claudio Llosa Isenrich}
\address{Faculty of Mathematics, Karlsruhe Institute of Technology, Englerstr. 2, 76131 Karlsruhe, Germany}
\email{claudio.llosa@kit.edu}

\thanks{}
\keywords{BNSR invariants, Finiteness properties, Dehn functions, Subgroups of hyperbolic groups, Coabelian subgroups}
\subjclass[2020]{20F65, 20F67, 20F69, 20F06, 20J05, 20F05, 57M07}

\begin{document}

\begin{abstract}
Given a finitely presented group $G$ and a surjective homomorphism $G\to \mathbb{Z}^n$ with finitely presented kernel $K$, we give an upper bound on the Dehn function of $K$ in terms of an area-radius pair for $G$. As a consequence we obtain that finitely presented coabelian subgroups of hyperbolic groups have polynomially bounded Dehn function. This generalises results of Gersten and Short and our proof can be viewed as a quantified version of results from Renz' thesis on the second BNSR invariant.
\end{abstract}

\maketitle

\section{Introduction}

A natural problem in geometric group theory that has recently received a lot of attention is understanding the geometry of subgroups of a given group $G$. One important source of interesting subgroups are kernels of surjective homomorphisms $G\to \mathbb{Z}^n$; we will call such subgroups \emph{coabelian} subgroups of $G$. This is because methods based on BNSR invariants \cite{BNS-87,Ren-88,BieRen-88} and Morse theory \cite{BesBra-97} allow us to understand the finiteness properties of such kernels, which has enabled the construction of finitely presented subgroups with interesting geometric properties.

A classical geometric invariant of a finitely presented group $G=\left\langle X\mid R\right\rangle$ is its Dehn function $\delta_G(n)$, which is defined as the maximal number of conjugates of relations required to detect if a word of length $\leq n$ in $X$ represents the trivial element of $G$. It can also be interpreted geometrically as an optimal isoperimetric function for loops in the universal cover of a closed manifold with fundamental group $G$. 

Hyperbolic groups form an important class of groups in geometric group theory. One way to define them is as the class of all groups with linear Dehn function \cite{Gro-87} (see also \cite[Section III.H.2]{BriHae-99} and the references therein). This naturally leads to the question which functions can arise as Dehn functions of finitely presented subgroups of hyperbolic groups. It was first raised by Brady \cite{Bra-99}, following his construction of the first example of a finitely presented non-hyperbolic subgroup of a hyperbolic group, and has also been raised by Brady and Tran \cite{BraTra-21} in the context of the recent constructions of new examples of finitely presented non-hyperbolic subgroups of hyperbolic groups \cite{Lod-18,Kro-21,IMM-23,LIMP-21,LloPy-22, KroLlo-23, LloPy-23}. By a result of Gersten and Short \cite{GerSho-02} the Dehn functions of kernels of homomorphisms from hyperbolic groups onto $\mathbb{Z}$ are polynomially bounded, putting strong constraints on their nature. However, this result can not be applied to the most recent examples of non-hyperbolic finitely presented subgroups of hyperbolic groups constructed by Kropholler and the author \cite{KroLlo-23}, and by Py and the author \cite{LloPy-23}, since they are kernels of surjective homomorphisms onto $\mathbb{Z}^n$ with $n>1$. This raises the question if their Dehn functions could behave differently. The main motivation for this work is to show that they are also polynomially bounded. In particular, this shows that if a finitely presented subgroup of a hyperbolic group with non-polynomially bounded Dehn function exists, then it has to be found by different methods.

\begin{theorem}\label{thm:Dehn-hyp}
    Let $G$ be a hyperbolic group and let $\phi:G\to Q$ be a homomorphism to a finitely generated abelian group with finitely presented kernel $N:=\ker(\phi)$. 
    Then there is a polynomial $p(n)$ such that $\delta_N(n)\leq p(n)$.
\end{theorem}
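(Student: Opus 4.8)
The plan is to obtain Theorem~\ref{thm:Dehn-hyp} from the general upper bound on the Dehn function of a finitely presented coabelian subgroup in terms of an area--radius pair of the ambient group (our main result), after two elementary reductions.

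\emph{Reduction to a free abelian quotient.} First I would reduce to the case $Q=\Z^n$ with $\phi$ surjective. Write $Q\cong\Z^n\oplus T$ with $T$ finite, let $\psi\colon G\to\Z^n$ be $\phi$ followed by the projection killing $T$, and put $A:=\psi(G)\cong\Z^m$ with $m\le n$, so that $\psi\colon G\twoheadrightarrow A$. Then $K:=\ker(\psi)=\phi^{-1}(T)$ contains $N=\ker(\phi)$ as a normal subgroup, and $\phi$ induces an embedding $K/N\hookrightarrow T$, whence $[K:N]\le|T|<\infty$. Since $N$ is finitely presented of finite index in $K$, the group $K$ is finitely presented; and since a finitely generated group is quasi-isometric to its finite-index subgroups and the Dehn function is a quasi-isometry invariant of finitely presented groups, $\delta_N\simeq\delta_K$. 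So it suffices to bound $\delta_K$, and after relabelling we may assume $Q=\Z^n$ and $\phi$ surjective (the case $n=0$ being just the statement that a finite-index subgroup of a hyperbolic group has linear Dehn function).

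\emph{Hyperbolic groups have a linear area--radius pair.} Next I would record that, for a fixed finite presentation of a hyperbolic group $G$, there is a constant $C$ so that every null-homotopic word $w$ with $|w|\le\ell$ bounds a van Kampen diagram $D$ with $\Area(D)\le C\ell$ and $\mathrm{Rad}(D)\le C\ell$. The area estimate is the defining linear isoperimetric inequality. For the radius, a counting argument suffices: in any van Kampen diagram every interior edge borders at most two $2$-cells, so $E(D)\le|\partial D|+\tfrac12\lambda\,\Area(D)$ with $\lambda$ the maximal relator length, and since $D^{(1)}$ is connected this gives $\mathrm{Rad}(D)\le E(D)\le\ell+\tfrac12\lambda\,\Area(D)$, which is linear in $\ell$ once $\Area(D)$ is. (Thus for hyperbolic groups the radius is automatically controlled by the linear Dehn function; the separate radius bound in the general theorem only becomes an independent constraint for groups with faster-growing Dehn functions.)

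\emph{Conclusion and main obstacle.} Finally I would feed the linear area--radius pair of $G$ into the general bound, producing a polynomial $p$ (of degree depending on $m=\mathrm{rk}(A)$ and on the exponents in that bound) with $\delta_K(\ell)\le p(\ell)$; together with $\delta_N\simeq\delta_K$ this yields the theorem. The real difficulty of the paper is entirely in the general theorem itself — an effective, quantitative version of Renz's link between $\Sigma^2$ and finite presentability of kernels — so for this corollary the only points that need care are that the two reductions preserve its hypotheses, namely surjectivity of the character onto a free abelian group and finite presentability of the new kernel, both checked above; everything else is routine.
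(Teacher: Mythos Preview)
Your reduction step is fine (and in fact Theorem~\ref{thm:Main-Dehn} already allows any finitely generated abelian $Q$, so even this is optional). The gap is in your area--radius pair. The bound in Theorem~\ref{thm:Main-Dehn} is $\delta_N(n)\le A^{g(n)}f(n)$, so feeding in a \emph{linear} radius $g(n)=Cn$ yields $A^{Cn}\cdot Cn$, which is exponential, not polynomial. Your edge-counting argument is correct and does give a linear radius bound, but that is exactly what one plugs in for automatic groups to get Corollary~\ref{cor:Dehn-automatic}; it is not strong enough for Theorem~\ref{thm:Dehn-hyp}.

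What is needed --- and what the paper uses --- is that hyperbolic groups admit an area--radius pair with \emph{logarithmic} radius: by \cite[Lemma~2.2]{GerSho-02} one can take $(f,g)=(An\log n,\,B\log n)$. Then $A^{g(n)}f(n)=n^{B\log A}\cdot An\log n$, which is polynomial. This logarithmic radius is a genuinely hyperbolic phenomenon (coming from the thin-triangles/Dehn-algorithm structure, not just from the linear isoperimetric inequality via edge counting), and it is the crucial input here. Your parenthetical remark that ``the separate radius bound \dots only becomes an independent constraint for groups with faster-growing Dehn functions'' is therefore backwards: for hyperbolic groups the radius bound is doing essentially all of the work in this deduction.
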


While a priori it might be possible to use the methods by Gersten and Short \cite{GerSho-02}, as well as the methods in this work to obtain explicit numerical upper bounds on the degree of the polynomial $p(n)$, in practice this turns out to be difficult for any concrete example. This is because attaining such a bound would require finding suitable explicit finite presentations for which our proof can be quantified. In forthcoming work \cite{BraKroLloSor-24} Brady, Kropholler, Soroko and the author  will use techniques based on pushing fillings in cube complexes to obtain the first explicit numerical upper bounds on the polynomial degrees of the Dehn functions for some of the examples constructed by Brady, Lodha and Kropholler \cite{Bra-99, Lod-18, Kro-21}.

Theorem \ref{thm:Dehn-hyp} will be a consequence of the following generalisation of Gersten and Short's Theorem B in \cite{GerSho-02} to arbitrary finitely generated abelian quotient groups.

\begin{theorem}\label{thm:Main-Dehn}
    Let $G$ be a finitely presented group that fits into an extension
    \[
        1\to N \to G \to Q\to 1
    \]
    with $Q$ finitely generated abelian and $N$ finitely presented. Assume that $(f,g)$ is an area-radius pair for $G$ with $f(n)\geq n$. Then there is a constant $A>1$ such that the Dehn function $\delta_N(n)$ of $N$ is bounded above by $n\mapsto A^{g(n)} f(n)$.
\end{theorem}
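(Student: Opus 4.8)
Throughout, the \emph{radius} $\operatorname{Rad}(\Delta)$ of a van Kampen diagram $\Delta$ is the largest distance in $\Delta^{(1)}$ from a vertex to the boundary, and ``$(f,g)$ is an area--radius pair for $G$'' means that every null-homotopic word of length $\le k$ over a fixed finite presentation bounds a diagram of area $\le f(k)$ \emph{and} radius $\le g(k)$. The plan is to fill the given loop of $N$ inside $G$ using $(f,g)$, lift the filling to the $\mathbb{Z}^n$-cover of the presentation complex, and convert it into a filling over a finite presentation of $N$, replacing each $2$-cell by a ``stack'' of cells whose size is exponential in the height to which the filling is forced to rise. First reduce to $Q=\mathbb{Z}^n$: if $Q=\mathbb{Z}^n\oplus F$ with $F$ finite, composing $\phi$ with $Q\to\mathbb{Z}^n$ replaces $N$ by a finite-index overgroup that is again finitely presented and whose Dehn function is equivalent to $\delta_N$, the equivalence being absorbed into the constant $A$. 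So write $\phi=\chi\colon G\to\mathbb{Z}^n$, fix a finite presentation $\mathcal P=\langle X\mid R\rangle$ of $G$ and, for $a\in\mathbb{Z}^n$, a word $u_a$ in $X$ with $\chi(u_a)=a$ and $|u_a|$ linear in $\|a\|$; for $x\in X$ put $x^{(a)}:=u_a\,x\,u_{a+\chi(x)}^{-1}\in N$. Let $Y$ be the presentation complex of $\mathcal P$, $\tilde Y$ its universal cover, $\hat Y:=\tilde Y/N$ the $\mathbb{Z}^n$-cover, so $\pi_1\hat Y=N$; reading off the cell structure of $\hat Y$ gives an infinite, $\mathbb{Z}^n$-periodic presentation $\mathcal P_\infty$ of $N$ with generators the $x^{(a)}$ and relators the telescopings $W^{(a)}_r$ of $u_a r u_a^{-1}$ along $r\in R$, each of bounded length and involving only $x^{(b)}$ with $\|b-a\|$ bounded. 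Finally extend $\chi$ to a $\mathbb{Z}^n$-equivariant, $C_1$-Lipschitz height $\tilde\chi\colon\tilde Y\to\mathbb{R}^n$.

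\emph{The quantitative core.} Because $N$ is finitely presented, Renz's theorem gives $S^{n-1}\subseteq\Sigma^2(G)$ for the sphere $S^{n-1}$ of real characters of $G$ vanishing on $N$, and its compactness yields a single constant $\rho$ witnessing the $\Sigma^2$-condition in all directions. From this I extract: a finite ball $S=\{x^{(a)}:\|a\|\le m\}$ generating $N$ (possible since $N$ is finitely generated); for every $a$ a word $\tau_a$ in $S^{\pm1}$ representing $x^{(a)}$, obtained by iterating ``push one unit towards the origin'' moves of reach $\rho$, so that $|\tau_a|\le B^{\|a\|}$ for a constant $B$; and a finite relator set $\mathcal R$ such that each periodic relator $W^{(a)}_r$ bounds a ``standard'' van Kampen diagram over $\mathcal P_N:=\langle S\mid\mathcal R\rangle$ of area $\le B^{\|a\|}$, obtained by tracing the same pushing moves inwards until only a bounded-length word, a relator from $\mathcal R$, remains. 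Replacing, in an arbitrary $\mathcal P_\infty$-diagram, each edge labelled $x^{(a)}$ by a path reading $\tau_a$ and each $2$-cell $W^{(a)}_r$ by its standard diagram --- the substitutions matching along shared edges --- converts a $\mathcal P_\infty$-diagram with $K$ cells, all of whose vertices satisfy $\|\tilde\chi\|\le h$, into a $\mathcal P_N$-diagram with the same boundary word and at most $B^{h+C_2}K$ cells.

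\emph{Transporting the filling.} Let $w$ be a word in $S^{\pm1}$ of length $\le n$ that is trivial in $N$; as a word in $X^{\pm1}$ it has length $\le\ell_0 n$, where $\ell_0$ bounds the $X$-lengths of elements of $S$. It is trivial in $G$, so it bounds a van Kampen diagram $\Delta$ over $\mathcal P$ with $\operatorname{Area}(\Delta)\le f(\ell_0 n)$ and $\operatorname{Rad}(\Delta)\le g(\ell_0 n)$; lift it to $\tilde\Delta\to\tilde Y$ based at $1$. Since each element of $S$ lies in $N$, $\tilde\chi$ returns to its starting value after each generator of $w$ and varies by at most $D:=C_1\ell_0$ within one, so $\|\tilde\chi\|\le D$ on $\partial\tilde\Delta$; and every vertex $v$ of $\tilde\Delta$ is joined to $\partial\tilde\Delta$ by a path in $\tilde\Delta^{(1)}$ of length $\le\operatorname{Rad}(\Delta)\le g(\ell_0 n)$, whence $\|\tilde\chi(v)\|\le h:=D+C_1\,g(\ell_0 n)$. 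Projecting $\tilde\Delta$ to $\hat Y$ gives a $\mathcal P_\infty$-diagram for $w$ with at most $f(\ell_0 n)$ cells, all at height $\le h$; applying the conversion above,
\[
\operatorname{Area}_{\mathcal P_N}(w)\ \le\ B^{h+C_2}\,f(\ell_0 n)\ =\ B^{C_2+D}\bigl(B^{C_1}\bigr)^{g(\ell_0 n)}f(\ell_0 n)\ \le\ A^{g(n)}\,f(n)
\]
for a suitable constant $A>1$, the last step absorbing the harmless rescaling $n\mapsto\ell_0 n$ (using $f(n)\ge n$). Taking the supremum over all such $w$ gives $\delta_N(n)\le A^{g(n)}f(n)$.

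\emph{Main obstacle.} The heart of the argument is the extraction in the second paragraph: producing the descent words $\tau_a$ with a \emph{uniform} reach and an \emph{exponential} length bound, and the accompanying fillings of the relators $W^{(a)}_r$ that cost only a bounded factor per unit of height. For $n=1$ this is essentially the Gersten--Short situation, where one can literally push along $\mathbb{Z}$; for $n\ge2$ there is no preferred direction, the ``height spheres'' in $\mathbb{Z}^n$ are topologically nontrivial, and one must push radially inwards while controlling how the pieces interact --- this is precisely where openness of $\Sigma^2(G)$, equivalently compactness of the sphere of characters killing $N$, enters, and making Renz's existence argument effective enough to yield these clean multiplicative bounds is the quantitative content of the proof. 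A secondary point requiring care is that the radius must be measured \emph{inside} $\Delta$ and carried through the lift and the projection to $\hat Y$ without loss, and that the torsion reduction does not spoil the bound.
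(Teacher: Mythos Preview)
Your overall strategy matches the paper's exactly: fill the loop in $G$ using the area--radius pair, lift, bound the $\mathbb{Z}^n$-height by the radius, and push the filling down using the $\Sigma^2$ structure extracted via compactness of $S(G,N)$; the bound $A^{g(n)}f(n)$ then drops out. The difference is in how the pushing is implemented. The paper pushes the \emph{whole} diagram vertex by vertex: its Lemma~\ref{lem:van-Kampen-diagrams-quantitative} takes a diagram $M_p$ of norm $c$, replaces the star of each vertex of norm $>c-\tfrac{a}{2}$ by Renz's $t$-translated filling, and shows --- by tracking how vertex degrees evolve under successive star replacements --- that one complete sweep reduces the norm by $\tfrac{a}{2}$ while multiplying the area by at most a fixed constant; iterating $O(g(n))$ sweeps brings everything to bounded height.

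Your plan is instead to precompute, for each lifted relator $W^{(a)}_r$, a ``standard'' $\mathcal P_N$-diagram of area $\le B^{\|a\|}$, and then replace the cells of the $\mathcal P_\infty$-filling independently. This is where a genuine gap lies. The pushing move applied to the closed loop $r^{(a)}$ produces an annulus of $t$-conjugation relators between $r^{(a)}$ and $\hat r^{(a+\chi(t))}$, together with the $M_{\hat r}$ piece at strictly smaller height; but the annulus cells have their outer edges on $\partial r^{(a)}$ and hence sit at height $\sim\|a\|$, so they are not relators of $\mathcal P_N$. You cannot recurse on them by height, so ``tracing the pushing moves inwards'' never yields a $\mathcal P_N$-diagram for a single cell. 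Put differently, a lone $2$-cell has no interior vertex to push: its boundary is fixed at height $\|a\|$, and any filling of it must touch that height along its rim. Nor does first substituting $x^{(b)}\mapsto\tau_b$ help, since bounding the $\mathcal P_N$-area of the resulting word $\tau_{b_1}\cdots\tau_{b_k}$ is precisely the Dehn-function statement you are trying to prove. The paper's global approach sidesteps all of this because the boundary of the \emph{assembled} diagram is a word in $S$ and hence already at bounded height, so every maximal-norm vertex is interior and can be eliminated. You are right that the ``quantitative core'' is the crux; the missing idea is that the pushing must be performed on the assembled diagram, not cell by cell.
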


We recall that a pair $(f,g)$ is an area-radius pair for a finite presentation of $G$ if for every word of length at most $n$ there is a van Kampen diagram with area $\leq f(n)$ and radius $\leq g(n)$. We refer to Section \ref{sec:Dehnfunctions} for further details on Dehn functions and area-radius pairs and to \cite{Bri-02} for the definition of a van Kampen diagram.

To prove Theorem \ref{thm:Main-Dehn} we first carefully review Renz' proof of one of his main thesis results \cite[Satz C]{Ren-88}, which relates subspheres of the second homotopical BNSR invariant of a group to finite presentability of kernels of certain morphisms to free abelian groups (see Theorem \ref{thm:homotopical-BNSR}). Renz' proof involves the construction of a van Kampen diagram for every loop in a $K(N,1)$. We will then produce an upper bounds on the Dehn function of $N$ by proving that one can bound the area of the van Kampen diagrams obtained from this construction from above in terms of an area-radius pair for the group $G$.

Besides the application of Theorem \ref{thm:Main-Dehn} to subgroups of hyperbolic groups, we can also deduce the following generalisation of \cite[Theorem A]{GerSho-02} for automatic groups.

\begin{corollary}\label{cor:Dehn-automatic}
    Let $G$ be a finitely presented (synchronously or asynchronously) automatic group and let $\phi:G\to Q$ be a homomorphism to a finitely generated abelian group with finitely presented kernel $N:=\ker(\phi)$.
    Then there is a $C>1$ such that $\delta_N(n)\leq C^n$.
\end{corollary}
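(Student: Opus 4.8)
The plan is to deduce Corollary \ref{cor:Dehn-automatic} directly from Theorem \ref{thm:Main-Dehn}, the only real work being to equip $G$ with a good enough area-radius pair. First, I would place matters in the setting of Theorem \ref{thm:Main-Dehn}: replacing $Q$ by the subgroup $\phi(G)$, which is again finitely generated abelian, one gets a short exact sequence $1\to N\to G\to \phi(G)\to 1$ with $N$ finitely presented by hypothesis, and since every synchronously or asynchronously automatic group is finitely presented, $G$ is finitely presented as well. Thus Theorem \ref{thm:Main-Dehn} applies as soon as one fixes an area-radius pair $(f,g)$ for $G$ with $f(n)\ge n$.

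The key input is the classical fact that an automatic group admits an area-radius pair $(f,g)$ in which $f$ is at most exponential -- quadratic in the synchronous case -- while $g$ is linear. One produces such a pair by combing loops: given a null-homotopic word $w$ of length at most $n$, write $w_i$ for its length-$i$ prefix and stack, for $i=1,\dots,n$, the corridors between the combing lines $\sigma_{\overline{w_{i-1}}}$ and $\sigma_{\overline{w_{i}}}$ joining the basepoint to the elements $\overline{w_{i-1}}$ and $\overline{w_i}$; since $\overline{w_{i-1}}$ and $\overline{w_i}$ differ by a generator these two combing lines (asynchronously) fellow travel, and the resulting assembly is a van Kampen diagram $D_w$ for $w$. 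The number of $2$-cells of $D_w$ is bounded above by an exponential (respectively quadratic) function of $n$ -- this is exactly the standard isoperimetric inequality for (a)synchronously automatic groups. For the radius, every vertex of $D_w$ lies within the fellow-traveller constant $k$ of one of the paths $\sigma_{\overline{w_i}}$, and $\sigma_{\overline{w_i}}$ is a path in $D_w$ emanating from the basepoint; since in an automatic structure the combing lines can be taken of length linear in the word length of their endpoints -- for synchronously automatic groups because combing lines are quasigeodesics, and in the asynchronous case as in \cite{GerSho-02} -- the length of $\sigma_{\overline{w_i}}$ is at most $\lambda n+c$ for constants $\lambda\ge 1$, $c\ge 0$ independent of $w$. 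Hence the radius of $D_w$ is at most $\lambda n+c+k$, so one may take $g(n)=\lambda n+c+k$ and $f(n)=\max\{n,D^{\,n}\}$ for a suitable $D>1$.

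Feeding this pair into Theorem \ref{thm:Main-Dehn}, with the constant $A>1$ it provides, yields
\[
    \delta_N(n)\ \le\ A^{g(n)}\,f(n)\ \le\ A^{\lambda n+c+k}\,D^{\,n}\ =\ A^{c+k}\bigl(A^{\lambda}D\bigr)^{n}\ \le\ C^{n}
\]
with $C:=A^{\lambda+c+k}D$, which is the assertion of the corollary.

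The step I expect to require genuine care is the linear bound on the radius of $D_w$ in the asynchronous case, equivalently the statement that the combing lines entering the construction can be chosen of linearly bounded length. This is essential and not cosmetic: a merely exponential radius function $g$ would turn the factor $A^{g(n)}$ in Theorem \ref{thm:Main-Dehn} into a doubly exponential function and destroy the bound. Pinning this down is precisely where I would lean on the treatment of asynchronously automatic groups in \cite{GerSho-02}; granting that input, the rest is routine bookkeeping with the estimate furnished by Theorem \ref{thm:Main-Dehn}.
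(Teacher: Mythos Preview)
Your proposal is correct and follows essentially the same approach as the paper: both arguments reduce the corollary to Theorem \ref{thm:Main-Dehn} by invoking the area-radius pair $(An^2,Bn)$ (synchronous) or $(C^n,Dn)$ (asynchronous) for automatic groups, citing \cite{GerSho-02} for this input. Your write-up simply unpacks the combing construction behind that citation in more detail and is careful about the constant bookkeeping, but the logical structure is identical.
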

The upper bound in Corollary \ref{cor:Dehn-automatic} is optimal: hyperbolic groups are automatic and Bridson gave an example of a cocyclic subgroup of a direct product of two hyperbolic groups with exponential Dehn function \cite[Section 2.5]{Bri-01}. Since direct products of finitely many finitely generated free groups are automatic, Corollary \ref{cor:Dehn-automatic} has the following interesting consequence:

\begin{corollary}\label{cor:Dehn-SFP}
    If $N$ is a finitely presented coabelian subgroup of a direct product of finitely many finitely generated free groups, then there is a $C>1$ such that $\delta_N(n)\leq C^n$.
\end{corollary}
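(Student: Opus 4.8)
The plan is to derive Corollary~\ref{cor:Dehn-SFP} as an immediate special case of Corollary~\ref{cor:Dehn-automatic}. The only thing that needs justification is that a direct product $F = F_1 \times \dots \times F_k$ of finitely many finitely generated free groups is finitely presented and (synchronously) automatic, so that Corollary~\ref{cor:Dehn-automatic} applies to it with $G = F$.

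First I would recall that each finitely generated free group $F_i$ is automatic: this is classical, since $F_i$ is hyperbolic and hyperbolic groups are (biautomatic, hence) synchronously automatic, or alternatively one can exhibit an explicit automatic structure directly from the regular language of geodesic words in a free basis. Next I would invoke the fact that the class of synchronously automatic groups is closed under direct products: given automatic structures on $F_1, \dots, F_k$, one builds an automatic structure on the product by taking the language of ``interleaved'' or padded tuples of accepted words, which is again regular, and checking the fellow-traveller property coordinatewise. This is a standard result (e.g.\ in Epstein et al., \emph{Word Processing in Groups}). Since each $F_i$ is finitely generated and finitely presented, so is the direct product $F$. Hence $F$ is a finitely presented synchronously automatic group.

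Now let $N \leq F$ be a finitely presented coabelian subgroup, i.e.\ $N = \ker(\phi)$ for some surjective homomorphism $\phi : F \to Q$ onto a finitely generated abelian group $Q$ (every coabelian subgroup arises this way, possibly after replacing $Q$ by the image, which is still finitely generated abelian). Applying Corollary~\ref{cor:Dehn-automatic} to $G = F$ and this $\phi$ yields a constant $C > 1$ with $\delta_N(n) \leq C^n$, which is exactly the assertion.

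There is essentially no obstacle here; the statement is a direct corollary and the proof amounts to citing the closure of automaticity under direct products and the automaticity of free groups. The only point requiring the smallest amount of care is to make sure the hypothesis of Corollary~\ref{cor:Dehn-automatic} is met verbatim, namely that $F$ is both finitely presented and automatic and that $N$ is finitely presented; all three hold by the remarks above, the last being the running hypothesis on $N$.
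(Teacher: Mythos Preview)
Your proposal is correct and follows exactly the paper's approach: the paper simply notes that direct products of finitely many finitely generated free groups are automatic and then invokes Corollary~\ref{cor:Dehn-automatic}. Your additional remarks justifying automaticity and finite presentability of the product merely spell out what the paper leaves implicit.
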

 This is related to a conjecture by Bridson (it was also raised as a question by Dison in his PhD thesis \cite{Dis-08}), which asserts that all finitely presented subgroups of a direct product of finitely many finitely generated free groups have polynomially bounded Dehn function. While progress on Bridson's conjecture has been made for specific classes of subgroups of direct products of free groups, it remains open in general, even for coabelian subgroups. In particular, to our knowledge Corollary \ref{cor:Dehn-SFP} has not been stated explicitly anywhere, although it can probably also be deduced using the methods developed in \cite{Dis-08}. We refer to \cite{KroLlo-23-II, LloTes-20} for further context and results on this problem. In forthcoming work with Ascari, Bertolotti, Italiano and Migliorini \cite{ABILIM-24} we will give a positive answer to a uniform version of Bridson's conjecture raised by Dison \cite[Question 1]{Dis-08} for all subgroups of a direct product of $n$ non-abelian free groups of finiteness type $F_{n-1}$ and not $F_n$; this will solve this question completely in the case of three factors.\vspace{.2cm}

 {\noindent \bf Structure.} In Section \ref{sec:BNSR-invariants} we introduce homological and homotopical BNSR invariants and discuss some of their most important properties. In Section \ref{sec:Renz-proof} we give an account of Renz' proof of Theorem C from his thesis \cite{Ren-88} for $n=2$. Our discussion in Sections \ref{sec:BNSR-invariants} and \ref{sec:Renz-proof} closely follows Renz' presentation in \cite{Ren-88} and its purpose is to introduce his construction of van Kampen diagrams whose area we will later bound in Section \ref{sec:Dehnfunctions}. While making some small changes to Renz' presentation to suit our intended use, we do not claim to prove new results in those sections. However, to our knowledge this is the first English account of this part of Renz' proof and may thus be of independent interest. In Section \ref{sec:Dehnfunctions} we prove upper bounds on the area of the van Kampen diagrams constructed in Section \ref{sec:Renz-proof} and then apply these bounds to show Theorem \ref{thm:Main-Dehn}. \vspace{.2cm}

 {\noindent \bf Acknowledgements.} We thank Martin Bridson for helpful comments and for pointing out the optimality of Corollary \ref{cor:Dehn-automatic}.

\section{BNSR invariants and finiteness properties}\label{sec:BNSR-invariants}

Introduced by Bieri, Neumann, Strebel and Renz, BNSR invariants characterise the finiteness properties of kernels of homomorphisms onto free abelian groups. We say that a group $G$ is of finiteness type $F_n$ if there is a classifying CW complex $K(G,1)$ with finitely many cells of dimension $\leq n$. We say that a group is of type $F_{\infty}$ if it is of type $F_n$ for all $n$ and of type $F$ if it has a finite $K(G,1)$. Finiteness properties naturally generalise finite generation ($\Leftrightarrow$ $F_1$) and finite presentability ($\Leftrightarrow$ $F_2$). One can view the finiteness property $F_n$ as a homotopical finiteness property of a group and can also define more algebraic homological finiteness properties over unitary rings $R$. 
 
 For an integer $n\geq 0$, we say that a group $G$ is of type $FP_n(R)$ if there is a partial projective resolution
 \[
    P_n\to P_{n-1}\to \cdots \to P_0\to R\to 0
 \]
 of the trivial $RG$-module $R$ by finitely generated projective $RG$-modules $P_i$ up to dimension $n$. If $G$ is of type $F_n$, the $G$-action on the universal cover of a classifying space for $G$ provides such a resolution, showing that $F_n$ implies $FP_n(R)$ for all rings $R$. One can also show that $FP_1(R)$ and $F_1$ are equivalent. Examples of Bestvina and Brady show that for $n\geq 2$ the property $FP_n(R)$ is strictly weaker than $F_n$ \cite{BesBra-97}. However, if $G$ is of type $F_2$, then $FP_n(\Z)$ implies $F_n$ (see \cite[VIII.7]{Bro-82}).

 The character sphere $S(G)$ of a finitely generated group $G$ is the quotient space
 \[
    S(G):=\left({\rm Hom}(G,\R)\setminus \left\{0\right\}\right)/\sim,
 \]
 where we call two characters $\chi_1,\chi_2:G\to \mathbb{R}$ equivalent and write $\chi_1\sim \chi_2$ if there is a positive real number $\lambda>0$ such that $\chi_1=\lambda\chi_2$. We will denote by $\left[\chi\right]$ the equivalence class of $\chi$ in $S(G)$.

In his thesis \cite{Ren-88} Renz introduced a sequence of invariants for a group $G$ of type $F_n$ which arise as subsets of the character sphere and describe the homotopical finiteness properties of kernels of elements of $S(G)$:
\[
    {}^{\ast}\Sigma^{n}(G)\subseteq {}^{\ast}\Sigma^{n-1}(G)\subseteq \cdots \subseteq {}^{\ast}\Sigma^{0}(G)=S(G).
\]

These generalise a set of invariants $\Sigma^{n}(G,R)$ introduced by Bieri, Neumann and Strebel for $n=1$ and by Bieri and Renz for $n\geq 2$ which characterise the homological finiteness properties $FP_n(R)$ over unitary rings $R$. 

To define the invariants ${}^{\ast}\Sigma^k(G)$ for a group $G$ of type $F_k$, we first need to define valuations on free $G$-complexes. We will then use them to define a valuation on the universal covering of a $K(G,1)$ for $G$. The definition of a valuation on a free $G$-complex is similar to the definition of a valuation on a free $\Z G$-module.

\begin{definition}
Let $C$ be a free $G$-complex and let $\chi\in {\rm Hom}(G,\R)$. A continuous map $v_{\chi}:C\to \R$ is called a \emph{valuation} on $C$ associated with $\chi$, if the following properties hold:
\begin{enumerate}
    \item $v_{\chi}(gc)=\chi(g)+v_{\chi}(c)$ for all $c\in C$ and $g\in G$.
    \item $v_{\chi}(C^0)\subseteq \chi(G)$
    \item If $\sigma$ is a cell of $C$ with boundary $\partial \sigma$, then $\min(v_{\chi}(\partial\sigma))\leq v_{\chi}(c)\leq \max(v_{\chi}(\partial \sigma))$ for all $c\in \sigma$.
\end{enumerate}
To simplify notation we will often write $v$ instead of $v_{\chi}$. Moreover, for a cell $\sigma$ of $C$ we will write $v(\sigma):=\min\left\{v(c)\mid c\in \sigma\right\}$ to denote the minimum of $v$ on $\sigma$. 
\end{definition}

\begin{remark}\label{rmk:discretized-valuation}
    By assumption (3) the minimum $v(\sigma):=\min \left\{v(c)\mid c\in \sigma\right\}$ will necessarily be attained in a 0-cell of $C$. We can thus view the induced map on cells of $C$ as a discretized version of $v$ which takes values in $\chi(G)$. In Section \ref{sec:Renz-proof} we will work with this discretized valuation.
\end{remark}

From now on we will assume that we have chosen a $K(G,1)$ with a single $0$-cell $\sigma^0$.  We recall that for a given presentation $\left\langle X\mid R\right\rangle$ of $G$ one can always construct such a $K(G,1)$, which we will denote by $K$ by starting from a $0$-cell, then attaching 1-cells for all generators and then building the 2-skeleton by attaching $n$-gons for all (reduced) relations of length $n$ along the words described by these relations. Finally, we construct $K$ by inductively attaching $n$-cells along generating sets to kill all higher homotopy groups. Note that the 2-skeleton $K^2$ of this complex is the \emph{Cayley complex} of $G$ with respect to its presentation $\left\langle X\mid R\right\rangle$.

Given a character $\chi\in {\rm Hom}(G,\R)\setminus \left\{0\right\}$ we can always define a valuation $v_{\chi}$ inductively on the skeleta of the universal cover $C$ of $K$ as follows \cite{Ren-88}. We fix a lift $\widetilde{\sigma}^0$ of $\sigma^0$ and define $v_{\chi}(g\cdot \widetilde{\sigma}^0):= \chi(g)$. We then choose a simplicial subdivision of $K$ which restricts to simplicial subdivisions of the cells. This induces a $G$-invariant simplicial subdivision of $C$. We then define $v_{\chi}$ inductively on $C$ so that it is affine linear on simplices as follows. Assume that we defined $v_{\chi}$ on the restriction of the simplicial structure to the $i$-skeleton $C^i$ of $C$. We extend $v_{\chi}$ to $(i+1)$-cells by first choosing auxiliary values on the $0$-simplices which lie in the interior of a $(i+1)$-cell $\sigma$ of $C$ in a $G$-equivariant way (take e.g. the average of $v_{\chi}$ over all $0$-cells of $\partial\sigma$), and then extending affine linearly on simplices. It is clear that $v_{\chi}$ will then define a valuation on $C$. 

We now assume that we are given a free $G$-complex $C$ with a single $G$-orbit of $0$-cells, a character $\chi\in {\rm Hom}(G,\R)\setminus\left\{0\right\}$ and a valuation $v$ on $C$ associated with $\chi$ satisfying the above assumptions. For $r\in \R$, let $C_{v,r}$ be the full subcomplex of $C$ spanned by the subset $\left\{g\widetilde{\sigma}^0 \mid v(g\widetilde{\sigma}^0)\geq - r\right\}\subseteq C^0$. Moreover, denote $C_v:= C_{v,0}$ and call $C_v$ the valuation subcomplex associated to $v$.\footnote{We will only use the notations $C_v$ and $C_{v,r}$ in this section and Section \ref{sec:Renz-proof-1}, while in Sections \ref{sec:Renz-proof-2} and \ref{sec:Dehnfunctions} we will use the notation $C_r$ with $r\in \R$ to denote a different subset of the universal cover of a presentation 2-complex for $G$.} We can now define the BNSR invariants of $G$.

\begin{definition}
    Let $G$ be a group of type $F_k$ and let $\left[\chi\right]\in S(G)$. Then $\left[\chi\right]\in {}^{\ast}\Sigma^k(G)$ if and only if there is a classifying space $K=K(G,1)$ for $G$ with finite $k$-skeleton and a valuation $v_{\chi}$ on the universal covering $C$ of $K$ such that the valuation subcomplex $C_v$ is $(k-1)$-connected.
\end{definition}

According to \cite{Ren-88} if $\left[\chi\right]\in {}^{\ast}\Sigma^k(G)$, this does not imply that for every CW complex $K$ with finite $k$-skeleton and every valuation $v_{\chi}$ we have that $C_v$ is $(k-1)$-connected. However, there is a weaker criterion which is independent of all choices involved. 
\begin{definition}
    We say that the valuation subcomplex $C_v$ of $C$ is \emph{essentially $(k-1)$-connected} if there is a real number $r>0$ such that the inclusion $\iota:C_v\hookrightarrow C_{v,r}$ induces trivial homomorphisms $\iota_{\ast}:\pi_i(C_{v})\to \pi_i(C_{v,r})$ for $0\leq i \leq k$.
\end{definition}

\begin{theorem}[{\cite[Satz IV.3.4]{Ren-88}}]\label{thm:essentially-connected}
    Let $G$ be a group of type $F_k$ and let $\left[\chi\right]\in S(G)$. Then the following are equivalent:
    \begin{enumerate}
        \item $\left[\chi\right]\in {}^{\ast}\Sigma^k(G)$
        \item If $L$ is a $K(G,1)$ with finite $k$-skeleton and universal covering $D$, then for every valuation $v_{\chi}$ on $D$, the valuation subcomplex $D_v$ is essentially $(k-1)$-connected.
    \end{enumerate}
\end{theorem}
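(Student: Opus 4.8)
The plan is to prove the two implications separately, the common engine being a comparison principle: any two classifying spaces for $G$ with finite $k$-skeleton can be linked by $G$-equivariant maps that are \emph{$\chi$-controlled}, i.e. shift the valuation by at most a fixed constant; together with the shift identity $g\cdot C_{v,r}=C_{v,r-\chi(g)}$ this makes essential $(k-1)$-connectedness of the valuation filtration independent of the model and of the chosen valuation.

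\textbf{$(1)\Rightarrow(2)$.} I would fix a model $K$ with $C_v$ genuinely $(k-1)$-connected, and let $L$ be an arbitrary $K(G,1)$ with finite $k$-skeleton, universal cover $D$ and valuation $w=w_{\chi}$. Choose cellular homotopy equivalences $f\colon K\to L$ and $h\colon L\to K$ inducing the identity on $\pi_1=G$, and lift them to $G$-maps $\tilde f\colon C\to D$ and $\tilde h\colon D\to C$. Since the $k$-skeleta are $G$-cocompact, a fundamental domain has compact image, and propagating this bound equivariantly with the shift identity produces a single constant $c>0$ with $\tilde f(C_{v,r})\subseteq D_{w,r+c}$ and $\tilde h(D_{w,r})\subseteq C_{v,r+c}$ for all $r$, and with the two homotopies $\tilde h\tilde f\simeq\mathrm{id}_C$ and $\tilde f\tilde h\simeq\mathrm{id}_D$ likewise $\chi$-controlled by $c$. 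As $\chi\neq 0$, pick $g_0\in G$ with $-\chi(g_0)$ slightly above $c$; then $C_{v,c}\subseteq g_0 C_{v,0}$, and $g_0C_{v,0}\cong C_{v,0}$ is $(k-1)$-connected. Now the inclusion $D_{w,0}\hookrightarrow D_{w,R}$, for a suitable $R=R(c)$, is $G$-homotopic to the composite
\[
D_{w,0}\xrightarrow{\ \tilde h\ } C_{v,c}\hookrightarrow g_0C_{v,0}\xrightarrow{\ \tilde f\ } D_{w,R},
\]
which factors through a $(k-1)$-connected complex; hence $\iota_{\ast}\colon\pi_i(D_{w,0})\to\pi_i(D_{w,R})$ vanishes for $i\le k-1$. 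The remaining degree $i=k$ follows from a finiteness argument: the relevant cycles are carried by finitely many orbits under the submonoid $\{g:\chi(g)\ge 0\}$, and the shift identity lets one push their null-homotopies up to a common level. Thus $D_w$ is essentially $(k-1)$-connected.

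\textbf{$(2)\Rightarrow(1)$.} Since $G$ is of type $F_k$, fix some $K(G,1)$ with finite $k$-skeleton, universal cover $C$ and valuation $v$. By hypothesis there is $r>0$ with $\pi_i(C_v)\to\pi_i(C_{v,r})$ trivial for $i\le k$. I would then enlarge the model to absorb $r$: choose finitely many cycles in $C_v$ of dimensions $\le k$ --- possible because $C_v$ is cocompact under the submonoid $\{g:\chi(g)\ge 0\}$ --- whose translates kill $\pi_i(C_v)$ for $i\le k-1$; each such cycle is null-homotopic in $C_{v,r}$, hence in the contractible space $C$. Attach finitely many $G$-orbits of cells of dimensions $1,\dots,k$ to $C$ along these null-homotopies, and then kill all the newly created higher homotopy by freely attaching further $G$-orbits of cells --- legitimate exactly because every attaching map is null-homotopic in the contractible ambient complex. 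One obtains a new $K(G,1)$, say $K'$, still with finite $k$-skeleton. Extending $v$ over the new cells using valuation property (3), the new valuation subcomplex $C'_{v'}$ is $C_v$ together with precisely the new cells killing the chosen generators (and the translates thereof lying at nonnegative valuation), and a direct check gives $\pi_i(C'_{v'})=0$ for $i\le k-1$. Hence $[\chi]\in{}^{\ast}\Sigma^k(G)$.

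\textbf{Main obstacle.} The technical heart is the construction in $(1)\Rightarrow(2)$ of the $\chi$-controlled $G$-homotopy equivalences, together with the bookkeeping needed to carry a single comparison constant through the compositions and homotopies. A secondary point is making the enlarged complex in $(2)\Rightarrow(1)$ aspherical again --- handled by attaching the higher cells freely --- and ensuring uniformity in the top degree $i=k$, which is exactly where one exploits the monoid/shift structure of the valuation filtration.
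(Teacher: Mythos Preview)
The paper does not prove this theorem; it is quoted from Renz' thesis, with only the remark that the case $k=2$ follows from \cite[Lemma~2]{Ren-89} together with the invariance of essential connectivity under change of model. So there is no argument in the paper itself to compare against, and I comment on your proposal on its own merits.

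Your $(1)\Rightarrow(2)$ follows the standard comparison principle and is correct in outline for $i\le k-1$: factoring the inclusion $D_{w,0}\hookrightarrow D_{w,R}$ through a $(k-1)$-connected space kills $\pi_i$ in that range. Your separate treatment of $i=k$ responds to what looks like a typo in the paper's stated definition (the range should read $0\le i\le k-1$, consistently with the term ``$(k-1)$-connected''); in any case the one-line ``finiteness argument'' you offer there is not a proof.

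The genuine gap is in $(2)\Rightarrow(1)$. You assert that finitely many cycles, together with their translates under the submonoid $G_\chi=\{g:\chi(g)\ge0\}$, kill $\pi_i(C_v)$, invoking cocompactness of $C_v$ under $G_\chi$. But cocompactness under a monoid yields finitely many orbit types of \emph{cells}, not of homotopy generators: $G_\chi$ is not a group, its orbits are infinite, and maps of spheres can range over arbitrarily many cells, so this does not give finite generation of $\pi_i(C_v)$ in any useful sense. There is a second issue: your new cells are attached along nullhomotopies lying in $C_{v,r}$, so a $G$-translate of such a cell lies in the new valuation subcomplex only once $\chi(g)$ is large enough to push the attaching map back to nonnegative valuation; the original cycle, sitting at low valuation, is therefore not obviously killed in $C'_{v'}$. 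The way this is actually handled is by induction on the connectivity degree, drawing the needed finiteness from the finitely many $G$-orbits of cells of $K$ rather than from any monoid-generation statement for $\pi_i(C_v)$, and verifying at each stage that essential $(k-1)$-connectedness survives the modification so the induction can continue. Your outline gestures at the right objects but skips precisely these points.
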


We remark that in current literature ${}^{\ast}\Sigma^k(G)$ is usually defined using the equivalent characterisation given by Theorem \ref{thm:essentially-connected}(2). For our purposes it will suffice to have a proof of Theorem \ref{thm:essentially-connected} for $n=2$. Such a proof follows from \cite[Lemma 2]{Ren-89} and the independence of essential connectivity of all choices involved in its definition \cite{Ren-88,BieStr-book}.

We finish this section by explaining the connection between BNSR-invariants and finiteness properties. For a subset $N\subseteq G$, we denote by $S(G,N)\subseteq S(G)$ the subsphere
\[
    S(G,N):= \left\{[\chi]\in S(G)\mid \chi|_N\equiv 0\right\}
\]
of characters vanishing identically on $N$. One of the main results of Renz' thesis is
\begin{theorem}[{Renz \cite[Satz C]{Ren-88}}]\label{thm:homotopical-BNSR}
    Let $G$ be a group of type $F_n$ and let $\phi: G\to Q$ be a homomorphism onto an abelian group $Q$. Then the following are equivalent:
    \begin{enumerate}
        \item $S(G,N)\subseteq {}^{\ast}\Sigma^n(G)$
        \item $N:=\ker(\phi)$ is of type $F_n$.
    \end{enumerate}
\end{theorem}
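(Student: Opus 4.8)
The plan is to follow Renz and establish the two implications separately, after reducing to the case of a torsion-free quotient. Since $G$ is of type $F_n$ it is finitely generated, so $Q$ is a finitely generated abelian group. If $Q$ is finite, then $N$ has finite index in $G$ and is therefore of type $F_n$, while $S(G,N)=\emptyset$ is vacuously contained in ${}^{\ast}\Sigma^n(G)$, so both statements hold. Otherwise let $T_Q$ be the torsion subgroup of $Q$, so $Q/T_Q\cong\Z^r$ with $r\geq 1$, and let $h:G\to\Z^r$ be the composition of $\phi$ with the projection $Q\to Q/T_Q$. Then $N\leq\ker h$ with $\ker h/N$ finite, so $N$ is of type $F_n$ if and only if $\ker h$ is, and $S(G,N)=S(G,\ker h)$ is the full subsphere $S^{r-1}\hookrightarrow S(G)$ of classes of nonzero real linear combinations of the coordinates of $h$. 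Thus I would assume from now on that $Q=\Z^r$ and $N=\ker h$.

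For the implication $(2)\Rightarrow(1)$, suppose $N$ is of type $F_n$ and fix a classifying space $Y=K(N,1)$ with finite $n$-skeleton; its universal cover $\widetilde{Y}$ is contractible. A standard model for the extension $1\to N\to G\to\Z^r\to 1$ yields a $K(G,1)$ whose universal cover $C$ carries a $G$-equivariant, affine-on-cells map $\widehat{h}:C\to\R^r$ realising $G\to\Z^r$ and restricting to a fibration over $\R^r$ with fibre $\widetilde{Y}$ (trivial, since $\R^r$ is contractible); as $\widetilde{Y}$ has finite $n$-skeleton and $T^r$ is finite, this $K(G,1)$ has finite $n$-skeleton. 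Given $[\chi]\in S(G,N)$, composing $\widehat{h}$ with $\chi$, viewed as a linear form on $\R^r$, defines a valuation $v_\chi$ on $C$, and a routine Morse-theoretic comparison identifies the valuation subcomplex $C_v$, up to homotopy, with the preimage of the half-space $\{x\in\R^r:\chi(x)\geq 0\}$, hence with $\widetilde{Y}$; in particular $C_v$ is contractible. Therefore $[\chi]\in{}^{\ast}\Sigma^n(G)$, and since $[\chi]$ was arbitrary, $S(G,N)\subseteq{}^{\ast}\Sigma^n(G)$. The one point deserving care is that this argument uses that ${}^{\ast}\Sigma^n$-membership requires only a model with finite $n$-skeleton, whereas $\widetilde{Y}$ will in general have infinitely many cells in dimensions above $n$.

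The implication $(1)\Rightarrow(2)$ is the substantial one, and the one whose van Kampen diagrams are later quantified; I would carry it out in detail for $n=2$ while keeping $r$ arbitrary. Fix a $K(G,1)$ with finite $n$-skeleton, let $C$ be its contractible universal cover with its free $G$-action, and let $\widehat{h}:C\to\R^r$ be a $G$-equivariant, affine-on-cells map realising $G\to\Z^r$; on vertices it restricts to the discretised valuation of Remark \ref{rmk:discretized-valuation}, taken simultaneously for the $r$ coordinate characters $\chi_1,\dots,\chi_r:G\to\Z$. Write $\widehat{h}_\ell$ for the $\ell$-th coordinate of $\widehat{h}$ and $\{\widehat{h}_\ell\geq c\}$ for the full subcomplex spanned by the vertices where $\widehat{h}_\ell\geq c$. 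Filter $C$ by the subcomplexes $C_m$ spanned by the vertices $g\widetilde{\sigma}^0$ with $\|\widehat{h}(g\widetilde{\sigma}^0)\|_\infty\leq m$. Over each of the finitely many lattice points in a box lies exactly one $N$-orbit of vertices, so --- $K(G,1)$ having finite $n$-skeleton --- each $C_m$ has finitely many $N$-orbits of cells up to dimension $n$, and $\bigcup_m C_m=C$. By Brown's finiteness criterion, $N$ is of type $F_n$ provided this filtration is essentially $(n-1)$-connected, i.e.\ for every $m$ there is an $m'\geq m$ with $\pi_i(C_m)\to\pi_i(C_{m'})$ trivial for $i\leq n-1$ (finite generation of $N$, needed to even start, follows from ${}^{\ast}\Sigma^n(G)\subseteq{}^{\ast}\Sigma^1(G)$ and the Bieri--Neumann--Strebel criterion). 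For each of the $2r$ characters $\pm\chi_\ell$, all of which lie in $S(G,N)$, the hypothesis together with Theorem \ref{thm:essentially-connected} yields a uniform radius $\rho>0$ such that every map of $S^i$, $i\leq n-1$, into a super-level subcomplex $\{\widehat{h}_\ell\geq c\}$ extends to a map of $B^{i+1}$ into $\{\widehat{h}_\ell\geq c-\rho\}$, and symmetrically for sub-level subcomplexes. Given a map $S^i\to C_m$, contractibility of $C$ supplies a filling inside some $C_M$, and the heart of the argument is a \emph{pushing} procedure that trades it for a filling inside $C_{m'}$, with $m'$ depending only on $m$, $r$, $\rho$: whenever the filling protrudes past a coordinate hyperplane at a maximal level $M$, one cuts it along a level slice whose thickness slightly exceeds $\rho$ plus the $\widehat{h}_\ell$-diameter of a cell, obtaining spheres that lie in the corresponding extremal subcomplex; refilling these by means of the essential-connectivity radius produces a new filling whose maximal coordinate value has strictly dropped, and as the coordinate values are integers the iteration terminates. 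For $n=2$ the fillings are disks, the slices cut them into planar subsurfaces bounded by loops, the refillings are the van Kampen diagrams, and the final confined disk --- read off in $C/N$ over the finitely many $N$-orbit representatives of edges and $2$-cells of $C_{m'}$ --- both exhibits a finite presentation of $N$ and shows that this presentation suffices.

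The step I expect to be the main obstacle is this pushing procedure. Two things must be controlled. First, the refillings have to be arranged so that improving the confinement in one coordinate direction does not spoil the confinements already secured in the others: I would handle this by refilling each cutting sphere both outwards and inwards and invoking contractibility of $C$ once more to interpolate a filling confined to a bounded band, and then running the process over the directions in a fixed order. Second, one must verify that the iteration terminates with $m'$ bounded quantitatively --- linearly in $m$, $r$, $\rho$ and the maximal cell diameter --- rather than merely finitely. It is precisely the quantitative refinement of this bookkeeping, recording how the area of the successive fillings grows during the pushing, that Section \ref{sec:Dehnfunctions} will use to bound $\delta_N$ from above in terms of an area--radius pair for $G$.
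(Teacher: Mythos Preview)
Your reduction to a free abelian quotient and your treatment of $(2)\Rightarrow(1)$ are fine, though different in flavour from the paper: you build a $K(G,1)$ as a torus bundle with contractible fibre and read off contractibility of the valuation subcomplex geometrically, whereas the paper writes down an explicit presentation for the extension and verifies the combinatorial criterion of Theorem~\ref{thm:characterisation-Sigma2}(3) relation by relation. Both work.

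For $(1)\Rightarrow(2)$ your approach diverges from the paper in a way that matters, and the obstacle you flag is real and not resolved by what you wrote. You filter by the $\ell_\infty$-norm and push one coordinate at a time using only the $2r$ characters $\pm\chi_\ell$. As you say, refilling a sphere in a half-space $\{\widehat h_\ell\ge c\}$ gives a disk in $\{\widehat h_\ell\ge c-\rho\}$ with no control whatsoever on the other coordinates, so progress in direction $\ell$ can undo progress already made in the other directions. Your proposed fix (refill inwards and outwards, then interpolate in a bounded band) is not a standard move and you have not explained why such an interpolation exists with the required confinement; contractibility of $C$ alone does not give it.

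The paper sidesteps this entirely by a different geometric idea. It uses the \emph{Euclidean} norm on $Q=\Z^r$ and, for each vertex $g$ of maximal norm $c=\|g\|$ in the current diagram, pushes using the character $\chi_u$ with $u=\pi(g)/\|g\|$, i.e.\ the direction \emph{radially towards the origin from $g$}. Compactness of $S(G,N)$ (not just the finite set $\{\pm\chi_\ell\}$) is used to produce a single finite kit $\mathfrak M$ of replacement diagrams and uniform constants $a,b>0$ valid for \emph{every} $[\chi]\in S(G,N)$: every replacement raises the $\chi$-valuation by at least $a$ and moves images in $Q$ by at most $b$. A Pythagorean computation then shows that every new vertex $y$ lies in the intersection of the half-space $\{\langle u,\,\cdot\,\rangle\ge -c+a\}$ with the ball of radius $b$ about $\pi(g)$, whence $\|y\|\le\sqrt{(c-a)^2+b^2-a^2}<c-\tfrac{a}{2}$ once $c>\max\{b^2/a,\,a\}$. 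Thus a single local modification strictly decreases the Euclidean norm of the worst vertex by a uniform amount, with no risk of spoiling other directions; iterating confines the whole diagram to $C_q$.

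This adaptive, radial choice of character is precisely the missing idea in your outline, and it is also what makes the construction quantifiable in Section~\ref{sec:Dehnfunctions}: each step multiplies the area by a fixed constant, and the number of steps is linear in the radius of the initial $G$-filling.
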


 Let us also mention that Theorem \ref{thm:homotopical-BNSR} generalises an analogous result of Bieri, Neumann, Strebel and Renz for homological BNSR invariants.
\begin{theorem}[{Bieri, Neumann, Strebel, Renz \cite{BNS-87,BieStr-book,BieRen-88}}]\label{thm:homological-BNSR}
    Let $G$ be a group of type $FP_n(\mathbb{Z})$ and let $\phi: G\to Q$ be a homomorphism onto an abelian group $Q$. Then the following are equivalent:
    \begin{enumerate}
        \item $S(G,N)\subseteq \Sigma^n(G,\Z)$
        \item $N:=\ker(\phi)$ is of type $FP_n(\Z)$.
    \end{enumerate}
\end{theorem}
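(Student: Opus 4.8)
The plan is to deduce both implications from Brown's filtration criterion for the property $FP_n(\Z)$, applied to two different filtrations of a single $(n-1)$-acyclic free $G$-space, exploiting the geometry of an equivariant valuation into $\R^m$; this reconstructs the classical arguments of Bieri--Neumann--Strebel ($n=1$) and Bieri--Renz ($n\geq 2$). First I would reduce to the case $Q=\Z^m$: if $Q\cong\Z^m\oplus T$ with $T$ finite and $\phi'\colon G\to\Z^m$ is $\phi$ followed by the projection, then $\ker\phi'$ contains $N$ with finite index, so the two share the property $FP_n(\Z)$, while $S(G,N)=S(G,\ker\phi')$ because characters are torsion-free valued. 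Identifying $\Z^m$ with a lattice in $\R^m={\rm Hom}(\Z^m,\R)^{*}$, every $[\chi]\in S(G,N)$ equals $[\chi_u]$ for a unit vector $u\in S^{m-1}$, where $\chi_u=\langle u,\phi(\cdot)\rangle$, so condition (1) says exactly that $[\chi_u]\in\Sigma^n(G,\Z)$ for all $u$.

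Next I would set up the geometry. Using that $G$ is $FP_n(\Z)$, pick a free $\Z G$-resolution of $\Z$ that is finitely generated through degree $n$ and realise a truncation of it as the augmented cellular chain complex of a free $G$-CW-complex $X$ that is $(n-1)$-acyclic and $G$-cocompact through dimension $n$, together with a $\phi$-equivariant valuation $v\colon X\to\R^m$ built skeleton by skeleton as in Section~\ref{sec:BNSR-invariants} (now with target $\R^m$). For $S\subseteq\R^m$ write $X_S:=v^{-1}(S)$. Two families of subcomplexes matter: the half-space sets $X_{H_{u,r}}$, with $H_{u,r}=\{x:\langle u,x\rangle\geq -r\}$, for which the defining essential $(n-1)$-acyclicity of $\Sigma^n$ says that $[\chi_u]\in\Sigma^n(G,\Z)$ iff the direct systems $\{\widetilde H_k(X_{H_{u,r}};\Z)\}_r$ are essentially trivial for $k\leq n-1$; and the ball sets $X_{\bar B_r}$, which are $N$-cocompact (through dimension $n$) with $\bigcup_r X_{\bar B_r}=X$, so that by Brown's criterion $N$ is $FP_n(\Z)$ iff $\{\widetilde H_k(X_{\bar B_r};\Z)\}_r$ is essentially trivial for $k\leq n-1$. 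The theorem thus reduces to the assertion that the ball systems are essentially trivial iff all half-space systems are.

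For (1) $\Rightarrow$ (2) I would use that a closed ball is a locally finite intersection of half-spaces: over the bounded range of radii entering a Brown-criterion computation one approximates $\bar B_r$ by an intersection of finitely many half-spaces $H_{u_1,r},\dots,H_{u_p,r}$ with $\varepsilon$-dense directions, with $p$ and the necessary ``overshoot'' uniform in $r$ by a Lebesgue-number argument on $S^{m-1}$, and then an iterated Mayer--Vietoris argument bounds the image of $\widetilde H_k(X_{\bar B_r})$ in $\widetilde H_k(X_{\bar B_{r'}})$, for a uniformly larger $r'$, in terms of the essentially trivial half-space systems. For (2) $\Rightarrow$ (1): when $\chi$ is discrete, $\bar N:=\ker\chi$ contains $N$ with $\bar N/N\cong\Z^{m-1}$, hence $\bar N$ is $FP_n(\Z)$ (an extension of an $FP_n$ group by the $FP_\infty$ group $\Z^{m-1}$), and the case $m=1$ applied to $G\twoheadrightarrow G/\bar N\cong\Z$ — the classical Bieri--Neumann--Strebel/Bieri--Renz theorem, which serves as the base case — yields $[\chi]\in\Sigma^n(G,\Z)$; for a general, possibly dense, $[\chi_u]\in S(G,N)$ one splits each ball $\bar B_R$ along the hyperplane $\langle u,\cdot\rangle=-r$ and lets $R\to\infty$, transferring by Mayer--Vietoris the essential acyclicity of the ball systems of $N$ to the half-space systems, i.e.\ to $[\chi_u]\in\Sigma^n(G,\Z)$.

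The main obstacle is precisely the dictionary between ball-shaped and half-space-shaped sublevel sets with \emph{uniform} control: organising the Mayer--Vietoris bookkeeping in (1) $\Rightarrow$ (2) so that ``essentially trivial'' is preserved with a radius shift depending only on $n$, $m$ and the geometry of $X$ and not on the homology class being killed, and, in (2) $\Rightarrow$ (1), handling the dense characters, for which no finite-index subgroup of $G$ stabilises the relevant hyperplane and one cannot reduce to a quotient isomorphic to $\Z$. The reduction to $Q=\Z^m$, the construction of the equivariant valuation, and the two invocations of Brown's criterion should be routine once this geometric comparison is in place.
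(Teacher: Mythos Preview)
The paper does not prove this theorem: it is stated with attribution to the original sources \cite{BNS-87,BieStr-book,BieRen-88} and used only as background, the paper's actual work being on the homotopical analogue (Theorem~\ref{thm:homotopical-BNSR}) in the case $n=2$. So there is no ``paper's own proof'' to compare your proposal against.

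On its own merits, your outline is broadly in the spirit of the Bieri--Renz argument (Brown's criterion together with a comparison of half-space and ball filtrations), but two points need attention. First, in the direction (2)$\Rightarrow$(1) you invoke ``the case $m=1$'' as a base case; since that \emph{is} the theorem for $Q=\Z$, you are assuming the result you are asked to prove, at least in rank one. Either supply the rank-one argument --- which is where the real content lies --- or be explicit that you are only reducing the higher-rank statement to the rank-one statement already in the literature. Second, your treatment of dense characters via a ball-splitting Mayer--Vietoris is the weakest link and, as written, is not a proof; the standard and much cleaner route is to establish openness of $\Sigma^n(G,\Z)$ in $S(G)$ and then use that discrete (rational) characters are dense in $S(G,N)$, which reduces the dense case to the discrete one you already handled. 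The Mayer--Vietoris bookkeeping you sketch for (1)$\Rightarrow$(2) is on the right track, and you have correctly located the genuine difficulty: obtaining a radius shift that is uniform over all classes and all directions.
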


Since $FP_1(\Z)$ is equivalent to $F_1$, and for $n\geq 2$ we have that being $FP_n(\Z)$ and $F_2$ implies being $F_n$, one can deduce Theorem \ref{thm:homotopical-BNSR} from Theorem \ref{thm:homological-BNSR} and the case $n=2$ of Theorem \ref{thm:homotopical-BNSR}.

\section{From the second BNSR invariant to finite presentability of kernels}\label{sec:Renz-proof}

 Our proof of Theorem \ref{thm:Main-Dehn} will be based on quantifying Renz' proof of Theorem \ref{thm:homotopical-BNSR} for $n=2$ by extracting explicit van Kampen diagrams from it and then bounding their area. In this section we therefore recall the main steps of his proof. Besides providing the background for our proof of Theorem \ref{thm:Main-Dehn}, this section may also be of independent interest, as to our knowledge it provides the first English account of some of Renz' proofs that had previously only appeared in his thesis \cite{Ren-88} (which is written in German). For this reason we present proofs of both directions of Theorem \ref{thm:homotopical-BNSR} for $n=2$, despite the fact that to prove Theorem \ref{thm:Main-Dehn} we will only require the (more difficult) direction that (1) implies (2). In writing this section we stay close to Renz' original proof in \cite{Ren-88} and will also include versions of some of the graphic illustrations he used, as they will be helpful for our Dehn function estimates. 

\subsection{Preliminary definitions and results}
\label{sec:Renz-proof-1}

We will now only be interested in the invariant ${}^{\ast}\Sigma^2(G)$ of a finitely presented group $G$. Given a presentation $\left\langle X\mid R\right\rangle$ we will denote by $C=C(X;R)$ the universal cover of its associated Cayley complex (we could complete it to a universal cover of a $K(G,1)$ by attaching higher dimensional cells, but for our purposes the 2-skeleton suffices).

We label the vertex $g\widetilde{\sigma}^0$ of $C$ by the corresponding group element $g\in G$. A base point $b$ for a van Kampen diagram $M$ is a choice of a labelling of a vertex of $M$ by an element of $b\in G$. A base point fixes a unique combinatorial map $f:M \to C$, which allows us to label the other vertices of $M$ by unique elements of $G$. For a character $\chi$ we can then define a canonical valuation $v_{\chi}$ on van Kampen diagrams by $v_{\chi}(M):=\min\left\{v_{\chi}(f(M^{0}))\right\}$, where we set $v_{\chi}(g\widetilde{\sigma}_0):=\chi(g)$. As before, we extend $v_{\chi}$ to a valuation $v_{\chi}$ on edges, 2-cells and based edge paths in $C$ by defining it to be the minimum value of $v_{\chi}$ on their set of vertices. This valuation is the discretized version of the valuation defined in the previous section (see Remark \ref{rmk:discretized-valuation}). In particular, it takes values in $\chi(G)$ and is not continuous.

Renz gives the following combinatorial characterisation of ${}^{\ast}\Sigma^1(G)$.
\begin{theorem}[{Renz \cite[Theorem 1]{Ren-89}}]\label{thm:characterisation-Sigma1}
    Let $G$ be a finitely generated group, $X$ a finite generating set and $\left[\chi\right]\in S(G)$. Then the following are equivalent:
    \begin{enumerate}
        \item $\left[\chi\right]\in {}^{\ast}\Sigma^1(G)$
        \item there is a $t\in X^{\pm 1}$ with $\chi(t)>0$ such that for every $x\in X^{\pm 1}\setminus\left\{t^{\pm 1}\right\}$ the conjugate $t^{-1}xt\in G$ can be represented by a word $w_{\chi,x,t}$ in $X^{\pm 1}$ with $v_{\chi}(t^{-1}xt)<v_{\chi}(w_{\chi,x,t})$.
    \end{enumerate}
\end{theorem}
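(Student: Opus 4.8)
The plan is to prove Theorem~\ref{thm:characterisation-Sigma1} (Renz's combinatorial characterisation of ${}^{\ast}\Sigma^1(G)$) by unwinding the definition of ${}^{\ast}\Sigma^1(G)$ in terms of the valuation subcomplex $C_v$ of the universal cover $C = C(X;R)$ of the Cayley complex, using the fact that for $k=1$ being ``essentially $0$-connected'' just means that the $0$-skeleton of $C_v$ lies in a single connected component of $C_{v,r}$ for some $r>0$, i.e.\ any two vertices of $C_v$ can be joined by an edge path that stays in the half-space $\{v_\chi \geq -r\}$.

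\medskip

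\noindent\textbf{(2) $\Rightarrow$ (1).} Assume the letter $t\in X^{\pm1}$ with $\chi(t)>0$ and the words $w_{\chi,x,t}$ are given. First I would reduce the problem to connecting, in $C_{v,r}$ for a suitable $r$, an arbitrary vertex $g\widetilde\sigma^0$ with $v_\chi(g\widetilde\sigma^0)\geq 0$ to the base vertex $\widetilde\sigma^0$. Write $g$ as a word in $X^{\pm1}$; reading this word off from $\widetilde\sigma^0$ gives an edge path $p$ in $C$ from $\widetilde\sigma^0$ to $g\widetilde\sigma^0$, but $p$ may dip below the zero-level. The key move is a ``push-up'' procedure: whenever $p$ traverses an edge labelled $x$ at a vertex of low valuation, one uses the relation that $t^{-1}xt = w_{\chi,x,t}$ in $G$ --- which holds as a loop in $C$ --- to homotope $p$ across the corresponding $2$-cells, replacing the sub-path $x$ by $t\, w_{\chi,x,t}\, t^{-1}$ (conjugating by $t$ raises the valuation, and by hypothesis $v_\chi(w_{\chi,x,t}) > v_\chi(t^{-1}xt)$, so each such replacement strictly increases the minimum valuation along that portion). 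Iterating, after finitely many steps one obtains a path that first goes ``up'' along powers of $t$, then stays at high valuation, then comes back down, entirely within a fixed half-space $C_{v,r}$ where $r$ depends only on the finite data $X$, $R$ and the finitely many words $w_{\chi,x,t}$. Hence $C_v$ is essentially $0$-connected, so $[\chi]\in{}^{\ast}\Sigma^1(G)$. I would need to track that the number of push-up steps is finite (induct on, say, the number of letters of $g$ lying strictly below level $0$, or on a suitably defined total ``deficiency'' of the path) and that the excursion depth $r$ is uniformly bounded.

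\medskip

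\noindent\textbf{(1) $\Rightarrow$ (2).} Conversely, suppose $C_v$ is essentially $0$-connected, witnessed by some $r>0$: every pair of vertices of $C_v$ is joined by an edge path in $C_{v,r}$. Since $\chi\neq 0$ and $X$ generates $G$, some $x_0\in X^{\pm1}$ has $\chi(x_0)>0$; I would like to take $t = x_0$, but one may first need to replace the witness by passing to a larger bounded $r$ so that the vertex $t^{-1}\widetilde\sigma^0$ (which has $v_\chi = -\chi(t)<0$) and $\widetilde\sigma^0$, as well as all the vertices $t^{-1}x\widetilde\sigma^0$ for $x\in X^{\pm1}$, can be connected appropriately --- or, more cleanly, apply the connectivity statement to the vertices $\widetilde\sigma^0$ and $x t^{-1} x^{-1}\cdot(\text{something})$. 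The cleanest route: for each $x\in X^{\pm1}\setminus\{t^{\pm1}\}$, consider the two vertices $t\widetilde\sigma^0$ and $xt\widetilde\sigma^0$; both have valuation $\chi(t)>0$, hence both lie in $C_v$, so they are joined by an edge path $q_x$ in $C_{v,r}$. Reading the edge labels of $q_x$ gives a word $u_x$ in $X^{\pm1}$ representing $t^{-1}x t\in G$ (the element carrying $t\widetilde\sigma^0$ to $xt\widetilde\sigma^0$), and since $q_x$ stays in $C_{v,r}$ every vertex along it has $v_\chi\geq -r$, which translates into $v_\chi(u_x)\geq -r$ when read from the base vertex $t\widetilde\sigma^0$, i.e.\ after the shift, $v_\chi(\text{the path based at }\widetilde\sigma^0\text{ reading }t^{-1}u_x)\geq -r-\chi(t)$. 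Setting $w_{\chi,x,t}:=u_x$ and noting $v_\chi(t^{-1}xt) = -r' $ can be made strictly smaller than $v_\chi(w_{\chi,x,t})$ after possibly conjugating/adjusting by a power of $t$ (which only shifts valuations up), gives condition (2). The bookkeeping of which basepoint the valuation of a word is computed from, and ensuring the strict inequality $v_\chi(t^{-1}xt) < v_\chi(w_{\chi,x,t})$ rather than just $\leq$, is the delicate part here.

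\medskip

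\noindent\textbf{Main obstacle.} I expect the principal difficulty to be the uniform control of the excursion radius $r$ and the termination of the push-up procedure in the (2)~$\Rightarrow$~(1) direction: one must argue that replacing destabilising letters by their conjugate expressions $t\,w_{\chi,x,t}\,t^{-1}$ does not introduce new letters that dip even lower (this is exactly what the strict inequality $v_\chi(t^{-1}xt)<v_\chi(w_{\chi,x,t})$ buys, but one has to set up the right monovariant and confirm it strictly decreases), and that the total amount of ``fixing'' needed, hence the depth $r$ of the resulting half-space, is bounded in terms of the finite presentation alone and not the particular word $g$. This is precisely the mechanism that will later be quantified in Section~\ref{sec:Dehnfunctions} to extract areas of van Kampen diagrams, so it is worth phrasing the argument so that the number and size of the $2$-cell homotopies is transparent.
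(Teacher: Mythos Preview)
The paper does not prove this statement; it is quoted with attribution to Renz \cite[Theorem~1]{Ren-89} and used as a black box for the remainder of Section~\ref{sec:Renz-proof}. There is therefore no proof in the paper to compare your attempt against.

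That said, your outline is the standard argument and is essentially correct. For (2)$\Rightarrow$(1), the push-up move is exactly the right mechanism; the clean monovariant is the minimum valuation along the path. Replacing \emph{every} letter $y_i$ simultaneously by $t\,w_{\chi,y_i,t}\,t^{-1}$ and freely reducing turns the word $y_1\cdots y_n$ into $t\,w_{\chi,y_1,t}\cdots w_{\chi,y_n,t}\,t^{-1}$, and a short computation shows the new minimum valuation exceeds the old one by at least $\min\bigl\{\chi(t),\,\min_x\bigl(v_\chi(w_{\chi,x,t})-v_\chi(t^{-1}xt)\bigr)\bigr\}>0$. So termination is immediate and in fact one obtains that $C_v$ itself (not just some $C_{v,r}$) is connected.

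For (1)$\Rightarrow$(2), connecting $t\widetilde\sigma^0$ to $xt\widetilde\sigma^0$ is the right idea, and the strictness issue you flag is genuine but easily handled. First upgrade essential $0$-connectedness of $C_v$ to genuine connectedness: given $a,b\in C_v^0$, the $t$-rays $a,at,at^2,\dots$ and $b,bt,bt^2,\dots$ lie in $C_v$, and for $N$ with $N\chi(t)\geq r$ the vertices $at^N,bt^N$ lie in $t^N\cdot C_v$ and are therefore joined in $t^N\cdot C_{v,r}\subseteq C_v$. Now choose $t$ with $\chi(t)=\max\chi(X^{\pm1})$. When $\chi(x)<0$, connecting $t\widetilde\sigma^0$ to $xt\widetilde\sigma^0$ inside $C_v$ already gives $v_\chi(w_{\chi,x,t})\geq -\chi(t)>-\chi(t)+\chi(x)=v_\chi(t^{-1}xt)$. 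When $\chi(x)\geq 0$, both endpoints lie in the translate $t\cdot C_v=\{v_\chi\geq\chi(t)\}$, which is again connected; connecting there yields $v_\chi(w_{\chi,x,t})\geq 0>-\chi(t)=v_\chi(t^{-1}xt)$.
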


If $r=x_1x_2\cdots x_n\in R$ is a relation with $x_i\in X^{\pm 1}$, then we write $\widehat{r}_{\chi,t}$ for the word $\widehat{r}_{\chi,t}:= w_{\chi,x_1,t}w_{\chi,x_2,t}\cdots w_{\chi,x_n,t}$. It represents a relation in $G$ and there is thus a van Kampen diagram $M_{\widehat{r}_{\chi,t}}$ with boundary $\widehat{r}_{\chi,t}$. We fix a base point $b_0$ on the van Kampen diagram with one 2-cell defined by the relation $r_{\chi,t}$ and we fix the point $b_1:=b_0\cdot t$ as base point on $\widehat{r}_{\chi,t}$ for the diagram $M_{\widehat{r}_{\chi,t}}$. In particular, our choice of labelled vertex in $M_{\widehat{r}_{\chi,t}}$ naturally corresponds to a $t$-translate of our choice of labelled vertex in the van Kampen diagram for $r$. Then $v(b_1)=v(b_0)+\chi(t)$. \vspace{.2cm}

\noindent {\bf Convention.} When it is clear from context which $\chi$ and $t$ we chose, we will subsequently write $\widehat{r}$ (resp. $M_{\widehat{r}}$) for $\widehat{r}_{\chi,t}$ (resp. $M_{\widehat{r}_{\chi,t}}$) to simplify notation.

\begin{remark}\label{rmk:suitable-presentation}
Condition (2) in Theorem \ref{thm:characterisation-Sigma1} is open in $\left[\chi\right]$. If $A\subseteq {}^{\ast}\Sigma^1(G)$ is a closed (and thus compact) subset, then we can fix a finite set $\mathfrak{S}$ of words of the form $w_{\chi,x,t}$ with the following property:

\emph{For every $\left[\chi'\right]\in A$ there is a $t\in X^{\pm 1}$ with $\chi'(t)>0$ and a $\left[\chi\right]\in A$ such that for every $x\in X^{\pm 1}\setminus \left\{t^{\pm 1}\right\}$ we can choose $w_{\chi',x,t}=w_{\chi,x,t}\in \mathfrak{S}$ in Theorem \ref{thm:characterisation-Sigma1}.}

If, in addition, $G$ is finitely presented with presentation $\left\langle X\mid R\right\rangle$, we may thus always assume that $R$ contains all relations of the form $t^{-1}xt=w_{\chi,x,t}$ with $w_{\chi,x,t}\in \mathfrak{S}$.
\end{remark}

We can then give the following characterisation of the BNSR-invariant ${}^{\ast}\Sigma^2(G)$:
\begin{theorem}\label{thm:characterisation-Sigma2}
    Let $G$ be a finitely presented group and assume that we have chosen a presentation $\left\langle X\mid R\right\rangle$ as in Remark \ref{rmk:suitable-presentation} for a closed subset $A\subseteq {}^{\ast}\Sigma^1(G)$. 
    Then the following are equivalent for a character $\left[\chi\right]\in A$:
    \begin{enumerate}
        \item $\left[\chi\right]\in{}^{\ast}\Sigma^2(G)$.
        \item For every closed edge path $p$ in $C_v$ there is a van Kampen diagram $M$ such that $\partial M = p$ and $v_{\chi}(M)\geq 0$.
        \item There is a generator $t\in X^{\pm 1}$ with $\chi(t)>0$ such that for every relation $r\in R^{\pm 1}$ there is a van Kampen diagram $M_{\widehat{r}}$ for $\widehat{r}$ with $\partial M_{\widehat{r}} = \widehat{r}$ and $v_{\chi}(M_{\widehat{r}})> v_{\chi}(r)$.
    \end{enumerate}
\end{theorem}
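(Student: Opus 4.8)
The plan is to prove the cycle of implications $(1)\Rightarrow(2)\Rightarrow(3)\Rightarrow(1)$. The equivalence of $(1)$ and the "essential" version of $(k-1)$-connectedness is already available from Theorem \ref{thm:essentially-connected} for $k=2$, so the real content is translating the topological statement "$C_v$ is (essentially) simply connected" into the concrete van Kampen diagram statements $(2)$ and $(3)$ at the level of the chosen presentation $\langle X\mid R\rangle$.

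\textbf{$(1)\Rightarrow(2)$.} Assume $[\chi]\in{}^{\ast}\Sigma^2(G)$. First I would reduce to the complex $C=C(X;R)$ itself: since $[\chi]\in{}^\ast\Sigma^1(G)$ (as $A\subseteq{}^\ast\Sigma^1(G)$), Theorem \ref{thm:characterisation-Sigma1} together with Remark \ref{rmk:suitable-presentation} guarantees that $C_v$ is connected, and standard BNSR-invariant theory (the independence of ${}^\ast\Sigma^2$ of the finite $2$-skeleton chosen, via Theorem \ref{thm:essentially-connected} for $n=2$) gives us that $\pi_1(C_v)$ maps trivially into $\pi_1(C_{v,r})$ for some $r>0$; but in fact $[\chi]\in{}^\ast\Sigma^2(G)$ means we may choose a $K(G,1)$ whose $2$-skeleton is exactly our Cayley complex and whose valuation subcomplex is simply connected, so $C_v$ itself is simply connected. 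Now take a closed edge path $p$ in $C_v$. Since $C_v$ is simply connected, $p$ bounds a disc; subdividing and using that the $2$-cells of $C_v$ are (by definition of the full subcomplex) exactly those $2$-cells of $C$ all of whose vertices $g$ satisfy $\chi(g)\geq 0$, i.e. $v_\chi\geq 0$, we read off a van Kampen diagram $M$ with $\partial M=p$ all of whose vertices have valuation $\geq 0$, hence $v_\chi(M)\geq 0$. The minor care needed here is to choose the base point of $M$ so that the combinatorial map $M\to C$ lands in $C_v$; this is forced once we declare the base vertex of $p$, and the filling disc stays in $C_v$ because a nullhomotopy in $C_v$ is combinatorially a diagram over the $2$-skeleton of $C_v$.

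\textbf{$(2)\Rightarrow(3)$.} Fix $t\in X^{\pm1}$ with $\chi(t)>0$ supplied by Theorem \ref{thm:characterisation-Sigma1}, and let $r\in R^{\pm1}$. The word $\widehat r=\widehat r_{\chi,t}$ is obtained by replacing each letter $x_i$ of $r$ by $w_{\chi,x_i,t}$, which represents $t^{-1}x_it$; hence $\widehat r$ represents $t^{-1}rt=1$ in $G$, so it is a loop. The key point is that, because each $w_{\chi,x,t}$ satisfies $v_\chi(t^{-1}xt)<v_\chi(w_{\chi,x,t})$, after fixing the base point $b_1=b_0\cdot t$ the loop $\widehat r$ actually lies in $C_v$ shifted appropriately: more precisely, since $v(b_1)=v(b_0)+\chi(t)$ and the estimate on the $w$'s ensures every vertex traversed by the lift of $\widehat r$ has valuation $>v_\chi(r)$, one rescales/translates so that $\widehat r$ is a closed edge path in $C_v$ based at the relevant vertex. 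Applying $(2)$ gives a van Kampen diagram $M_{\widehat r}$ with $\partial M_{\widehat r}=\widehat r$ and $v_\chi(M_{\widehat r})\geq 0$; undoing the normalisation converts this into $v_\chi(M_{\widehat r})>v_\chi(r)$. The bookkeeping with base points and the exact shift is where I expect the argument to be fiddliest, and it is essentially the content of the "Convention" paragraph preceding the statement.

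\textbf{$(3)\Rightarrow(1)$.} This is Renz' construction and is the heart of the matter. Given the diagrams $M_{\widehat r}$ from $(3)$, one builds, for an arbitrary loop $p$ in $C_v$, a van Kampen diagram lying in $C_v$ by an inductive "pushing up" procedure: a general filling disc $D$ for $p$ in $C$ has some cells dipping below level $0$; one takes a $2$-cell $\sigma$ of $D$ of minimal valuation, which is a copy of some $r\in R^{\pm1}$, conjugated by some group element $g$, and replaces it using the diagram $M_{\widehat r}$ (suitably translated by $g$ and by $t$), strictly raising the minimal valuation on that cell. Iterating — with a termination/finiteness argument because only finitely many cells lie below any fixed level once $p$ is fixed, and the relations of the form $t^{-1}xt=w_{\chi,x,t}$ from Remark \ref{rmk:suitable-presentation} are available to splice the replacements together along the seams — produces a filling of $p$ entirely within $C_v$, showing $\pi_1(C_v)=1$ and hence $[\chi]\in{}^\ast\Sigma^2(G)$ (using Theorem \ref{thm:essentially-connected} to pass between the literal and essential versions as needed). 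The main obstacle throughout is precisely this induction: ensuring that each replacement step genuinely increases the minimal valuation, that the replacements glue correctly along shared edges using the fixed finite set $\mathfrak S$ of words, and that the process terminates — this is exactly the construction whose diagrams we will later need to control quantitatively in Section \ref{sec:Dehnfunctions}, so I would present it with enough care to track the areas involved.
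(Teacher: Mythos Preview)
The paper does not prove this theorem itself; it simply cites Renz's published results (Theorems~2, 3 and Lemma~3 of \cite{Ren-89}) for the equivalences $(1)\Leftrightarrow(2)$ and $(1)\Leftrightarrow(3)$ separately. So you are attempting more than the paper does, which is fine, but it means there is no proof in the paper to compare against directly.

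Your cycle has a genuine gap in $(1)\Rightarrow(2)$. You write that ``$[\chi]\in{}^\ast\Sigma^2(G)$ means we may choose a $K(G,1)$ whose $2$-skeleton is exactly our Cayley complex and whose valuation subcomplex is simply connected, so $C_v$ itself is simply connected.'' This is not what the definition gives you: it only guarantees that \emph{some} $K(G,1)$ with finite $2$-skeleton and \emph{some} valuation has simply connected valuation subcomplex. The paper explicitly warns, just before Theorem~\ref{thm:essentially-connected}, that membership in ${}^\ast\Sigma^k(G)$ does \emph{not} imply that $C_v$ is $(k-1)$-connected for every choice of complex and valuation. What you legitimately obtain from $(1)$ via Theorem~\ref{thm:essentially-connected} is only \emph{essential} simple connectivity of $C_v$ for the given presentation --- loops in $C_v$ bound in $C_{v,d}$ for some fixed $d>0$ --- and you cannot upgrade this to actual simple connectivity without further work. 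As written you have essentially assumed the conclusion of~$(2)$.

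Your other two implications are sound, and in fact your sketch for $(3)\Rightarrow(1)$ actually establishes the stronger $(3)\Rightarrow(2)$: the pushing procedure produces a van Kampen diagram lying entirely in $C_v$, which is exactly $(2)$. A clean way to repair the cycle is therefore to reroute it as $(2)\Rightarrow(1)$ (immediate from the definition), $(1)\Rightarrow(3)$, and $(3)\Rightarrow(2)$ (your pushing argument). For $(1)\Rightarrow(3)$ one combines essential simple connectivity with iteration of the hat construction: since the finitely many words $w_{\chi,x,t}$ satisfy strict inequalities, there is a uniform $\epsilon>0$ with $v_\chi(\widehat{s})\geq v_\chi(s)+\epsilon$ for every loop $s$; after $N$ iterations with $N\epsilon>d$ a translate of $\widehat{r}^{(N)}$ lies in $C_v$ and bounds in $C_{v,d}$, and the $t$-ring annuli between successive iterates (available because the presentation contains the relations $t^{-1}xt=w_{\chi,x,t}$ from Remark~\ref{rmk:suitable-presentation}) fill the region between $\widehat r$ and $\widehat{r}^{(N)}$ with valuation $>v_\chi(r)$.
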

\begin{proof}
    The equivalence of (1) and (2) is \cite[Theorem 2]{Ren-89}, while the equivalence of (1) and (3) is \cite[Theorem 3]{Ren-89} and \cite[Lemma 3]{Ren-89}.
\end{proof}

\subsection{{Proof of Theorem \ref{thm:homotopical-BNSR} for \texorpdfstring{$n=2$}{n=2}}}
\label{sec:Renz-proof-2}

Now assume that $\pi:G\to Q$ is a homomorphism onto an abelian group and denote by $N:=\ker(\phi)$ its kernel. We may assume that $Q= \mathbb{Z}^n$ is free abelian by passing to finite index subgroups of $G$ and $N$ if necessary. This will not change the finiteness properties of $N$.  

There is an isomorphism $\R^n\cong {\rm Hom}(Q,\mathbb{R})$ which maps $x\in \mathbb{R}^n$ to the homomorphism $q\to \langle x,q\rangle$ for $\langle , \rangle$ the Euclidean inner product on $\R^n$. We observe that the map $\pi$ induces a map $\pi^{\ast}:S(Q)\hookrightarrow S(G)$ with image $S(G,N)$. Moreover, the restriction of the Euclidean norm on $\mathbb{R}^n$, equips $Q=\mathbb{Z}^n$ with a Euclidean norm. This induces a \emph{norm on $G$} by setting $||g||:=||\pi(g)||$ with the following properties:
\begin{itemize}
    \item $||gh||\leq ||g|| + ||h||$ for all $g,h\in G$
    \item $||ng|| = ||g||$ for all $g\in G$, $n\in N$.
\end{itemize}

As above, given a based van Kampen diagram (resp. a path starting in a point of $G$), we can define an induced norm by maximising the norm $||\cdot ||$ over all labels of the diagram. \emph{We emphasize that here we take the maximum of all norms of labels rather than the minimum which we took for the valuation.}

The easier direction of the proof of Theorem \ref{thm:homotopical-BNSR} will be that we can deduce from $N$ being of type $F_2$ that $S(G,N)\subseteq {}^{\ast}\Sigma^2(G)$. To prove this we will use the equivalence of (1) and (3) in Theorem \ref{thm:characterisation-Sigma2}. The converse direction will require more work. We can use the finite presentability of $G$ to construct a van Kampen diagram for a given edge loop. We then push this filling down to a filling in a bounded neighbourhood of $N$ with respect to the norm $||\cdot ||$ on $G$. Theorem \ref{thm:characterisation-Sigma2}(3) will ensure that we can do this. We will deduce from this, that a closed bounded neighbourhood of $N$ in $C$ with respect to our norm is connected and simply connected. Since $N$ acts freely and cocompactly on this neighbourhood, this will imply that $N$ is finitely presented.

\begin{notation*}
For the remainder of this note, for $r\geq 0$, we will write $C_r$ for the full subcomplex of $C$ induced by the $0$-cells $g\widetilde{\sigma}^0$ with $||g\widetilde{\sigma}^0||\leq r$. This differs from the notation $C_{v,r}$ used in Sections \ref{sec:BNSR-invariants} and \ref{sec:Renz-proof-1}. However, no confusion should arise, as the only thing from those sections that we will use subsequently is the equivalence of (1) and (3) in Theorem \ref{thm:characterisation-Sigma2}.
\end{notation*}

\begin{lemma}\label{lem:cocompact-action}
    For every $r\geq 0$ the CW-complex $C_r$ is a free $N$-complex and the $N$-action on $C_r$ is cocompact.
\end{lemma}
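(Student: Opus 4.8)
The statement claims that for every $r\geq 0$, the full subcomplex $C_r\subseteq C$ spanned by the $0$-cells $g\widetilde\sigma^0$ with $\|g\|\leq r$ carries a free cocompact $N$-action. Since $C$ is the universal cover of the Cayley complex of $G=\langle X\mid R\rangle$, it is a free $G$-complex, so in particular $G$ (and hence $N\leq G$) acts freely on it. The only two things to check are therefore (i) that $N$ preserves the subcomplex $C_r$, and (ii) that $N$ acts cocompactly on $C_r$. Both are consequences of the elementary properties of the norm $\|\cdot\|$ on $G$ recorded just before the Notation box.

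First I would verify $N$-invariance. The norm on $G$ was defined by $\|g\|:=\|\pi(g)\|$, and it satisfies $\|ng\|=\|g\|$ for all $n\in N$, $g\in G$, since $\pi(ng)=\pi(g)$. Because $C_r$ is the \emph{full} subcomplex on the vertex set $\{g\widetilde\sigma^0 : \|g\|\leq r\}$, it suffices to observe that $N$ permutes this vertex set: if $\|g\|\leq r$ and $n\in N$, then $\|ng\|=\|g\|\leq r$, so $n\cdot(g\widetilde\sigma^0)=(ng)\widetilde\sigma^0$ is again a vertex of $C_r$. Since a full subcomplex is determined by its vertex set and $N$ acts simplicially, $N$ maps $C_r$ to itself; and the restricted action is still free because the ambient $G$-action on $C$ is free.

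For cocompactness, the key point is that there are only finitely many $N$-orbits of vertices in $C_r$. Two vertices $g\widetilde\sigma^0$ and $g'\widetilde\sigma^0$ lie in the same $N$-orbit precisely when $g^{-1}g'\in N$, i.e. when $\pi(g)=\pi(g')$; hence $N$-orbits of vertices of $C_r$ are in bijection with the set $\{\pi(g) : \|g\|\leq r\}=\{q\in Q : \|q\|\leq r\}$. This is a finite subset of $Q\cong\mathbb{Z}^n$ (a Euclidean ball of radius $r$ meets the lattice in finitely many points), so $C_r$ has finitely many $N$-orbits of vertices. Since $C$ is locally finite (the presentation $\langle X\mid R\rangle$ is finite, so each vertex lies on finitely many edges and $2$-cells) and $C_r$ is a full subcomplex, there are also only finitely many $N$-orbits of edges and of $2$-cells. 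Therefore $N\backslash C_r$ is a finite CW-complex, which is exactly the assertion that the $N$-action on $C_r$ is cocompact.

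I do not anticipate a genuine obstacle here: the lemma is a bookkeeping step that isolates the reason the later Dehn-function estimates take place inside an $N$-cocompact complex. The one point requiring a word of care is that $C_r$ must be taken to be the \emph{full} subcomplex on the chosen vertex set — it is only because of fullness that $N$-invariance of the vertex set upgrades to $N$-invariance of $C_r$, and that finiteness of the vertex quotient upgrades to finiteness of the whole quotient complex.
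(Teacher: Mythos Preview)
Your proof is correct and follows essentially the same approach as the paper's: the paper argues that the ball of radius $r$ in $Q$ is finite, hence its preimage in $G$ consists of finitely many $N$-orbits, and then invokes local finiteness of $C$ to conclude that $N\backslash C_r$ is compact. Your write-up is simply more explicit about $N$-invariance of the full subcomplex and about why local finiteness upgrades vertex-cocompactness to cocompactness; the only cosmetic slip is that for the left action one has $g'g^{-1}\in N$ rather than $g^{-1}g'\in N$, but since $N$ is normal this is immaterial.
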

\begin{proof}
    It is clear that $N$ acts freely on $C_r$. Moreover, the ball of radius $r$ with respect to the norm $||\cdot||$ on $Q$ has finitely many elements. Its preimage in $G$ thus consists of finitely many $N$-orbits. Since $C$ is locally finite, this implies that $N\backslash C_r$ is compact.
\end{proof}
A direct consequence of Lemma \ref{lem:cocompact-action} is:
\begin{corollary}\label{cor:finpres}
    If there is a $q\geq 0$ such that the subcomplex $C_q$ is connected and simply connected, then $N$ is finitely presented.
\end{corollary}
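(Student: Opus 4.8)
The plan is to invoke the standard criterion that a group acting freely, cellularly, cocompactly on a connected and simply connected CW-complex is finitely presented; this is a classical consequence of the theory of finiteness properties (see e.g.\ \cite[I.8]{Geoghegan-book} or \cite[VIII]{Bro-82}). Concretely, if $C_q$ is connected and simply connected, then by Lemma \ref{lem:cocompact-action} the complex $C_q$ is a free $N$-CW-complex with $N\backslash C_q$ compact. First I would pass to the quotient $Y:=N\backslash C_q$, which is then a compact (hence finite) CW-complex with $\pi_1(Y)\cong N$, because $C_q\to Y$ is a covering map with deck group $N$ and $C_q$ is simply connected, so $C_q$ is the universal cover of $Y$ and the long exact sequence (or the covering-space correspondence) gives $\pi_1(Y)\cong N$. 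A finite CW-complex has a finite presentation of its fundamental group obtained by taking generators for the $1$-cells (relative to a spanning tree of the $1$-skeleton) and relators for the $2$-cells; since $Y$ has finitely many $1$- and $2$-cells, this is a finite presentation of $N$.

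More precisely, the key steps in order are: (i) use Lemma \ref{lem:cocompact-action} to get that $N$ acts freely and cocompactly on $C_q$; (ii) form $Y=N\backslash C_q$ and observe it is a finite CW-complex since the action is free and cocompact and $C$ is locally finite; (iii) identify $\pi_1(Y)\cong N$ using that $C_q$ is the (connected, simply connected, hence universal) cover of $Y$ with deck transformation group $N$; (iv) conclude finite presentability of $N$ from finiteness of the $2$-skeleton of $Y$ via the Seifert--van Kampen theorem applied to the cell structure. Only the $1$- and $2$-skeleta of $C_q$ and $Y$ are needed for this argument, which is consistent with the fact that $C$ itself is only the universal cover of the Cayley complex.

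I do not expect any serious obstacle here: the statement is essentially a packaging of well-known facts, and all the real content (that $C_q$ can be taken connected and simply connected for suitable $q$) is deferred to later sections. The only point requiring a small amount of care is the identification $\pi_1(N\backslash C_q)\cong N$, which relies on $C_q$ being connected (so that the covering is connected and one genuinely recovers $N$ rather than a subgroup) --- this is exactly why connectivity, not merely simple connectivity, is included in the hypothesis.
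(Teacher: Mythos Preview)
Your proposal is correct and matches the paper's approach exactly: the paper states this corollary as ``a direct consequence of Lemma~\ref{lem:cocompact-action}'' without further proof, and what you have written is precisely the standard argument one would supply to fill in that sentence. There is nothing to add.
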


We now proceed to the proof of Theorem \ref{thm:homotopical-BNSR} and we start with the proof that (2) implies (1). Assume that $N$ is finitely presentable and let $N\cong \left\langle x_1,\cdots,x_m\mid r_1,\cdots, r_k\right\rangle$ be a finite presentation. Fix a finite presentation $\left\langle y_1,\cdots, y_n\mid s_1,\cdots , s_l\right\rangle$ for $Q$, where the $y_i$ correspond to a standard generating set of $\Z^n$ and the $s_j$ are the standard commutator relations between the $y_i$.

Since $G$ is an extension of $N$ by $Q$ we can choose a presentation for $G$ with generating set $\left\{x_1,\cdots, x_m, y_1, \cdots, y_n\right\}$ and relations $R\cup S_1 \cup S_2$, where
\begin{align*}
    R &= \left\{r_i(X)\mid 1\leq i \leq k\right\},\\
    S_1 &= \left\{ s_j(Y)=w_j(X) \mid 1\leq j \leq l \right\},\\
    S_2 &= \left\{y_j^{-1}x_i y_j= u_{ij}(X),~ y_jx_iy_j^{-1}=v_{ij}(X)\mid 1\leq i \leq m,~ 1\leq j \leq n\right\}.\\
\end{align*}

Let $\left[\chi\right]\in S(G,N)$. Since $N$ is finitely generated, we have $\left[\chi\right]\in {}^{\ast}\Sigma^1(G)$, by Theorem \ref{thm:homotopical-BNSR} for $n=1$. We can also prove this directly using Theorem \ref{thm:characterisation-Sigma1}. Indeed, for a generator $t$ with $\chi(t)>0$ we have $t\in \left\{y_1,\dots,y_n\right\}^{\pm 1}$, say $t=y_j$, and thus
\begin{align*}
    &t^{-1}x_it=u_{ij}(X) \mbox { and } t^{-1}x_i^{-1}t=v_{ij}(X)^{-1} & \mbox{ for } 1\leq i \leq m, \mbox{ and }\\
    &t^{-1}y_it=y_iw_{k_{ij}}(X) \mbox{ and } t^{-1} y_i^{-1} t = w_{k_{ij}}(X)^{-1} y_i^{-1} &\mbox{ for } 1\leq j\leq n,~i\neq j,~1\leq k_{ij}\leq n,\\
\end{align*}
showing that Theorem \ref{thm:characterisation-Sigma1} holds for any choice of $t$ with $\chi(t)>0$. Note that this also shows that our chosen presentation has the form of Remark \ref{rmk:suitable-presentation} for $A=S(G,N)$. 

To prove that $\left[\chi\right]\in {}^{\ast}\Sigma^2(G)$, we fix an element $t\in \left\{y_1^{\pm 1},\cdots, y_m^{\pm 1}\right\}$ with $\chi(t)>0$, say $t=y_1$, and verify that Condition (3) in Theorem \ref{thm:characterisation-Sigma2} holds for each of the relations of type $R$, $S_1$ and $S_2$.

If $r\in R$, then $t^{-1}x_it=u_{i1}(X)$ for $1\leq i \leq m$. In particular, $\widehat{r}$ is a word in the generators $X$ of $N$, implying that there is a van Kampen diagram $M_{\widehat{r}}$ with relations from $R$. Choosing a base point for $M_{\widehat{r}}$ which is a right-$t$-translate of a base point for $r$, we deduce that $v_{\chi}(M_{\widehat{r}})>v_{\chi}(r)$. Thus, Condition (3) holds for relations from $R$. 

One readily verifies that one can also construct explicit van Kampen diagrams $M_{\widehat{s}_i}$ for elements $s_i\in S_i$ with $v_{\chi}(M_{\widehat{s}_i})>v_{\chi}(s_i)$ to show that Condition (3) also holds for relations from $S_1$ and $S_2$ (see Figures V.1 - V.3 in \cite{Ren-88}). By Theorem \ref{thm:characterisation-Sigma2} this implies that $\left[\chi\right]\in {}^{\ast}\Sigma^2(G)$.

Conversely, assume that $S(G,N)\subseteq {}^{\ast}\Sigma^2(G)$. Since ${}^{\ast}\Sigma^2(G)\subseteq {}^{\ast}\Sigma^1(G)$, this implies that $N$ is finitely generated. We can choose a presentation $\left\langle X \mid R\right\rangle$ for $G$ such that the set $C_q$ is connected for all $q\geq 0$ by choosing a generating set $X$ which contains a finite generating set for $N$. We may further assume, by adding relations if necessary, that this presentation satisfies the properties of Remark \ref{rmk:suitable-presentation} with $A=S(G,N)$.

By Theorem \ref{thm:characterisation-Sigma2}(3), for every $\left[\chi\right]\in S(G,N)$ there is a generator $t\in X^{\pm 1}$ with $\chi(t)>0$ and a finite set $\mathfrak{M}_{\chi,t}$ of van Kampen diagrams such that for every $r\in R$ there is precisely one $M_{\widehat{r}}\in \mathfrak{M}_{\chi,t}$ with $\partial M_{\widehat{r}} = \widehat{r}$, and, moreover, this $M_{\widehat{r}}$ satisfies $v_{\chi}(M_{\widehat{r}})>v_{\chi}(r)$. Since this condition is open in $S(G,N)$, there is a neighbourhood $U_{\chi,t}$ of $\left[\chi\right]$ in $S(G,N)$ and a constant $c_{\chi,t}>0$ such that for every $\left[\chi'\right]\in U_{\chi,t}$, for $t'=t$ and for $\mathfrak{M}_{\chi',t'}:=\mathfrak{M}_{\chi,t}$ we obtain
\[
    \min \left\{ v_{\chi'}(M_{\widehat{r}})-v_{\chi'}(r)\mid M_{\widehat{r}}\in \mathfrak{M_{\chi,t}}, r\in R\right\}\geq c_{\chi,t}.
\]

By compactness of $S(G,N)$ we can cover it by finitely many of the $U_{\chi,t}$. Thus, there is a finite set $\mathfrak{M}$ and a constant $a>0$ such that for every $\left[\chi\right]\in S(G,N)$ there is a generator $t\in X^{\pm 1}$ and a subset $\mathfrak{M}_{\chi,t}\subset \mathfrak{M}$ with the following properties:
\begin{enumerate}
    \item for every $r\in R$ there is precisely one $M_{\widehat{r}}\in \mathfrak{M}_{\chi,t}$ with $\partial M_{\widehat{r}}=\widehat{r}$;
    \item $\min \left\{v_{\chi}(M_{\widehat{r}})-v_{\chi}(r)\mid r\in R, M_{\widehat{r}}\in \mathfrak{M}_{\chi,t}\right\}\geq a$.
\end{enumerate}

The homomorphism $\pi: G\to Q$ identifies $S(G,N)$ with the unit sphere $S^{n-1}$ in $\R^{n}$. For $u\in S^{n-1}$ we denote by $\chi_u:G\to \R$ the corresponding character and by $v_u$ the corresponding valuation. 

Aside from $a$ we will require a second real parameter that we will now introduce. The valuation $v_u(M_{\widehat{r}})$ and the norm $||M_{\widehat{r}}||$ depend on our choice of base point for $r$. We denote by $\mathfrak{M}'$ the set of based van Kampen diagrams obtained from the van Kampen diagrams $M_{\widehat{r}}\in \mathfrak{M}$ by choosing as base point a right $t$-translate of every possible base point of $r$ (where we label the chosen base point of $r$ with a group element of valuation $0$). We define
\[
    b:=\max\left\{||M_{\widehat{s}}||\mid M_{\widehat{s}}\in \mathfrak{M}'\right\}.
\]

The proof that (1) implies (2) in Theorem \ref{thm:homotopical-BNSR} then follows from the following result.

\begin{lemma}\label{lem:van-Kampen-diagrams}
    Let $q> \max\left\{\frac{b^2}{a},a\right\}$. Then for every closed edge path $p$ with base point $1$ in $C_q$ there is a van Kampen diagram $M_p$ for $p$ with $||M_p||\leq \max\left\{||p||, \frac{b^2}{a}, a\right\}$.
\end{lemma}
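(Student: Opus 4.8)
The plan is to fill $p$ by a two-stage process: first construct \emph{some} van Kampen diagram $M_0$ for $p$ using the finite presentation of $G$, then modify $M_0$ iteratively so that at each step we reduce the number of $2$-cells lying outside $C_q$ (more precisely, cells carrying a vertex of norm $>q$), until no such cell remains. The modification is the standard Renz-style ``pushing down'' move: if a $2$-cell $\sigma$ of the current diagram has a vertex of maximal norm, pick a character $[\chi]\in S(G,N)$ (roughly, pointing in the direction in which this vertex sticks out) and a generator $t$ with $\chi(t)>0$ as provided by the hypothesis $S(G,N)\subseteq{}^\ast\Sigma^2(G)$; then the relation $r$ read around $\partial\sigma$ can be replaced, after conjugating by $t$, by the diagram $M_{\widehat r}\in\mathfrak M_{\chi,t}$ whose valuation (with respect to the chosen base point) is strictly larger, by at least $a$, than that of $r$. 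Translating $M_{\widehat r}$ back by $t^{-1}$ and gluing it in place of $\sigma$, we trade $\sigma$ for a diagram all of whose vertices have smaller $\chi$-value, hence (for a suitable choice of $\chi$ adapted to the offending vertex) have been pushed strictly away from the ``far'' region. One has to be careful that this move is along the boundary $\partial\sigma$ and the outer boundary $p$ is left untouched.

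The bookkeeping I would set up is: let $C_r$ for $r\in\R$ be the full subcomplex on vertices of norm $\le r$, as in the Notation box, and argue by (reverse) induction on the maximal norm $m:=\|M\|$ attained by the current diagram that, as long as $m>q$, we can strictly decrease the number of vertices realising norms in the range $(q,m]$ — or, cleaner, decrease $m$ itself if there is a unique ``outermost layer''. The key estimates are the ones already isolated: each replacement increases the $\chi$-valuation of the affected cell by $\ge a$, while introducing new vertices whose \emph{norm} exceeds that of the deleted cell by at most $b$ (this is exactly what the parameter $b=\max\{\|M_{\widehat s}\|\}$ over the based family $\mathfrak M'$ controls, after the $t$-translation is accounted for). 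Iterating, a vertex that starts at norm $\rho>q$ can drift outward by at most $b$ per push, but each push moves it ``inward'' in the $\chi$-direction by $\ge a$, so after at most $\rho/a$ or so steps it has been pulled inside a controlled region; the two scales interact to give the bound $\max\{\|p\|,\ b^2/a,\ a\}$, with the $b^2/a$ term coming precisely from: a vertex can be displaced outward $b$-much, and the cascade of corrections to fix \emph{those} new far vertices can itself push things out, the geometric-type sum of these displacements being governed by $b\cdot(b/a)=b^2/a$. The hypothesis $q>\max\{b^2/a,a\}$ guarantees that a vertex of $p$ (norm $\le\|p\|\le q$ is not assumed, but $p\subseteq C_q$ so $\|p\|\le q$) is never pushed past the allowed bound, and that the process does not re-pollute the interior of $C_q$.

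The main obstacle, and the step I would spend the most care on, is \textbf{termination together with the norm bound}: one must show the pushing procedure actually halts, and that during the procedure no vertex ever exceeds $\max\{\|p\|,b^2/a,a\}$ in norm. Naively, replacing a far cell by a larger diagram could create \emph{more} far cells, so the count of far cells need not decrease monotonically; Renz's fix is to push down ``from the outside in'', always attacking a cell that realises the current maximal norm (or maximal $\chi_u$-value for the relevant $u$), and to observe that the newly created cells, while more numerous, all lie strictly closer to $N$ in the relevant direction, so a suitable lexicographic potential — (maximal norm attained, then number of vertices attaining it) — strictly decreases. Making this potential argument airtight, and checking that the constant $b$ genuinely bounds the outward displacement caused by one replacement once the $t$-conjugation and base-point choices from $\mathfrak M'$ are taken into account, is where the real work lies; the rest is the geometric summation giving $b^2/a$. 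Having established the lemma, Corollary~\ref{cor:finpres} applies with any $q>\max\{b^2/a,a\}$, since $C_q$ is then simply connected (every loop in $C_q$ bounds in $C_q$) and connected by our choice of generating set, yielding that $N$ is finitely presented and completing the proof that (1) implies (2) in Theorem~\ref{thm:homotopical-BNSR}.
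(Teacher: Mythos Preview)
Your overall strategy---start from an arbitrary van Kampen diagram for $p$ in $C$, then iteratively push the region of maximal norm inward using the $\mathfrak{M}_{\chi,t}$-replacements---is exactly the paper's approach. But your sketch misses the mechanism that makes it work, and the explanation you give for the constant $b^2/a$ does not lead to a proof.

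First, a smaller point: the paper replaces the entire \emph{star} of a vertex $g$ of maximal norm $c=\|M_p\|$, not a single $2$-cell $\sigma$. This matters because after the star replacement the vertex $g$ disappears entirely (it becomes $g\cdot t$, surrounded by the diagrams $M_{\widehat r_i}$), whereas replacing one $2$-cell would leave $g$ sitting in the neighbouring cells and you would have to argue separately that those are dealt with.

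Second, and this is the real gap: your account of $b^2/a$ as arising from a ``geometric-type sum'' of outward drifts---each push allowing norm to grow by $b$ while gaining $a$ in the $\chi$-direction, for a net $b\cdot(b/a)$---does not give a terminating procedure, because the character $\chi$ \emph{changes at every step} (it is chosen to point toward the current worst vertex), so there is no fixed direction in which progress accumulates. The paper's argument is instead a single-step Pythagorean estimate. With $u=\pi(g)/\|g\|$ and $\chi_u(\cdot)=\langle u,\pi(\cdot)\rangle$, the replacement of the star of $g$ produces new vertices $h$ that satisfy \emph{simultaneously}
\[
\langle u,\pi(h)\rangle\geq -c+a\quad\text{and}\quad \|\pi(h)-\pi(g)\|\leq b,
\]
the first from the valuation gain $\geq a$, the second from the definition of $b$. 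Intersecting this half-space with this ball and applying Pythagoras gives
\[
\|h\|\leq\sqrt{(c-a)^2+(b^2-a^2)}=\sqrt{c^2-2ac+b^2}<c-\tfrac{a}{2}
\]
as soon as $c>b^2/a$ and $c>a$. So one push already reduces by one the number of vertices of norm $\geq c-\tfrac{a}{2}$; no cascade of corrections is needed, and termination is immediate. The threshold $b^2/a$ is exactly the point below which this inequality ceases to guarantee a strict norm decrease---not the sum of a series.
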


Before proving Lemma \ref{lem:van-Kampen-diagrams}, we explain how it implies Theorem \ref{thm:homotopical-BNSR}(2). So assume that Lemma \ref{lem:van-Kampen-diagrams} holds. Then for every $q>\max\left\{\frac{b^2}{a},a\right\}$ and every closed edge path $p$ in $C_q$ based at $1$ there is a van Kampen diagram $M_p$ with $||M_p||\leq \max\left\{||p||,\frac{b^2}{a},a\right\}\leq q$. Thus, the connected component of $C_q$ containing $1$ is simply connected. Our choice of presentation for $G$ guarantees that $C_q$ is connected. Thus, we deduce from Lemma \ref{cor:finpres} that $N$ is finitely presented, implying that Theorem \ref{thm:homotopical-BNSR}(2) holds.

\begin{proof}[{Proof of Lemma \ref{lem:van-Kampen-diagrams}}]
    Let $q>\max\left\{\frac{b^2}{a},a\right\}$ and let $p$ be a closed edge path in $C_q$. There is a van Kampen diagram $M_p$ for $p$ in $C$. Assume that $||M_p||>\max\left\{||p||,\frac{b^2}{a},a\right\}$,  else we are done. Then there is at least one interior point $g$ of $M_p$ with $c:= ||g||=||M_p||\in \R_{>0}$. Let $u:= \frac{\pi(g)}{||g||}$, and let $\chi_u:G\to \R$ be the corresponding homomorphism. We assume that $\chi_u(g)=-c$ attains its minimum in $g$ (if $\chi_u(g)=+c$ we replace $\chi_u$ by $-\chi_u$). By our assumptions on $\mathfrak{M}$, we can choose a $t\in X^{\pm 1}$ with $\chi_u(t)>0$, a valuation $v_u$, and a subset $\mathfrak{M}_{\chi_u,t}\subset \mathfrak{M}$ such that the following hold:
    \begin{itemize}
        \item[(i)] for every relation $r\in R$ there is a unique van Kampen diagram $M_{\widehat{r}}\in \mathfrak{M}_{\chi_u,t}$ with $\partial M_{\widehat{r}}=\widehat{r}$;
        \item[(ii)] $ \min\left\{v_u(M_{\widehat{r}})-v_u(r)\mid M_{\widehat{r}}\in \mathfrak{M}_{\chi_u,t}\right\}\geq a$
    \end{itemize}
    We replace the star of the vertex $g$ in the diagram $M_p$ by a new filling which reduces the norm of $g$ by at least $a$ and all new vertices created in the process have norm at most $c-\frac{a}{2}$. We do this by performing the modifications pictured in Figures \ref{fig:pushing-1}, \ref{fig:pushing-2} and \ref{fig:pushing-3} (based on Figures IV.2, IV.3 and IV.4 in \cite{Ren-88}). To obtain the diagram in Figure \ref{fig:pushing-2} from the diagram in Figure \ref{fig:pushing-1} we replace 2-cells labelled by relations $r_i$ adjacent to $g$ by a $t$-ring with a van Kampen diagram of type $M_{\widehat{r}_i}\in \mathfrak{M}_{\chi_u,t}$ glued along its interior. Our choice of presentation guarantees that the words of the form $t^{-1}xt=w_{\chi,x,t}$ making up the $t$-rings are relations from $R$. This allows us to obtain the diagram in Figure \ref{fig:pushing-3} from the diagram in Figure \ref{fig:pushing-2} by cancelling adjacent pairs of relations labelled $s_i$ and $s_i^{-1}$. As a result we obtain a new van Kampen diagram $M'_p$ for $p$ in which by (ii) all new vertices $h$ satisfy $v_u(h)>-c+a$. Moreover, by definition of $b$ the images of the new vertices in $Q$ have distance at most $b$ from $\pi(g)$. 
    
    Equivalently, using the definition of $\chi_u$, all images of the new vertices under $\pi$ lie in the intersection 
    \[
        \left\{y\in \Z^n \mid \left\langle \frac{\pi(g)}{||g||}, y\right\rangle \geq -c + a\right\}\cap\left\{y\in \Z^n\mid ||y-\pi(g)||\leq b  \right\}.
    \]
    
    An easy calculation using Pythagoras' Theorem (compare \cite[Figure V.4]{Ren-88}) shows that the norm of every new vertex $y$ satisfies\footnote{Renz' proof differs slightly in that he only assumes $c>\frac{b^2}{2a}$, which implies $||y||<c$. Since the $||y||$ are square roots of integers, this process will still terminate. However, an additional argument is required to see that the norm reduction is by a uniform constant that does not depend on $c$, a property we will need in Section \ref{sec:Dehnfunctions}.}
    \begin{align*}
        ||y|| &\leq \sqrt{(c-a)^2+(b^2-a^2)} =\sqrt{c^2-2ac+b^2}\\
        &<\sqrt{c^2-2ac+ac}<\sqrt{c^2-ac +\frac{a^2}{4}}=c-\frac{a}{2},
    \end{align*}
    where for the second inequality we use that $c>\frac{b^2}{a}$ and for the last equality we use that $c>a$.
    
    We deduce that $||M_p'||\leq ||M_p||$ and the number of vertices of $M_p'$ with norm $\geq c-\frac{a}{2}$ has been reduced by $1$. Since $q>\max\left\{\frac{b^2}{a},a\right\}$, we can inductively reduce every van Kampen diagram in $C$ for a loop $p$ in $C_q$ to a van Kampen diagram for $p$ in $C_q$ (in finitely many steps). We deduce that $C_q$ is simply connected. Thus, Lemma \ref{lem:cocompact-action} implies that $N$ is finitely presented.
\end{proof}

\begin{figure}
    \begin{center}
    \def\svgwidth{9.5cm}
\begingroup%
  \makeatletter%
  \providecommand\color[2][]{%
    \errmessage{(Inkscape) Color is used for the text in Inkscape, but the package 'color.sty' is not loaded}%
    \renewcommand\color[2][]{}%
  }%
  \providecommand\transparent[1]{%
    \errmessage{(Inkscape) Transparency is used (non-zero) for the text in Inkscape, but the package 'transparent.sty' is not loaded}%
    \renewcommand\transparent[1]{}%
  }%
  \providecommand\rotatebox[2]{#2}%
  \newcommand*\fsize{\dimexpr\f@size pt\relax}%
  \newcommand*\lineheight[1]{\fontsize{\fsize}{#1\fsize}\selectfont}%
  \ifx\svgwidth\undefined%
    \setlength{\unitlength}{377.00787402bp}%
    \ifx\svgscale\undefined%
      \relax%
    \else%
      \setlength{\unitlength}{\unitlength * \real{\svgscale}}%
    \fi%
  \else%
    \setlength{\unitlength}{\svgwidth}%
  \fi%
  \global\let\svgwidth\undefined%
  \global\let\svgscale\undefined%
  \makeatother%
  \begin{picture}(1,0.86466165)%
    \lineheight{1}%
    \setlength\tabcolsep{0pt}%
    \put(0,0){\includegraphics[width=\unitlength,page=1]{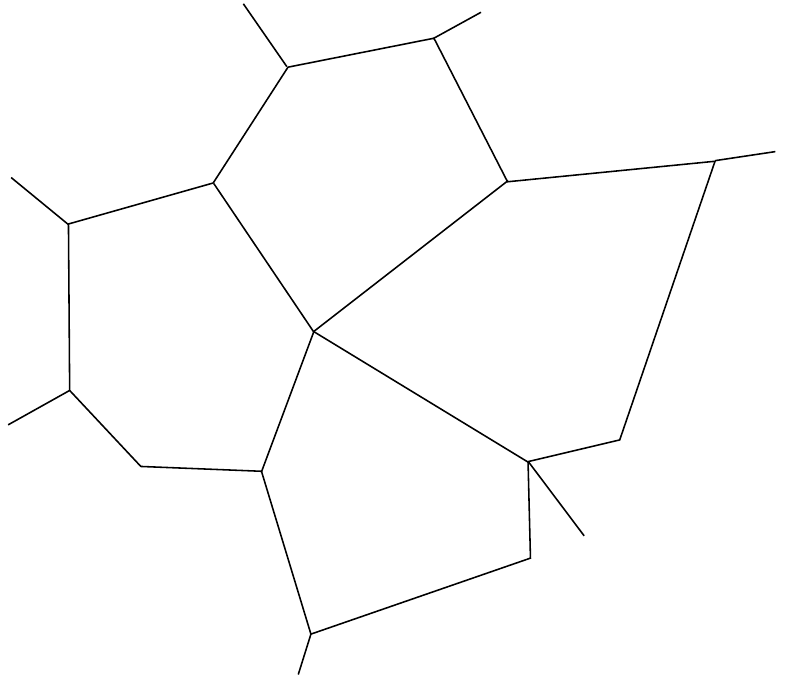}}%
    \put(0.3960717,0.47169784){\makebox(0,0)[lt]{\lineheight{1.25}\smash{\begin{tabular}[t]{l}$g$\end{tabular}}}}%
    \put(0.20311496,0.42917931){\makebox(0,0)[lt]{\lineheight{1.25}\smash{\begin{tabular}[t]{l}$r_2$\end{tabular}}}}%
    \put(0.42847701,0.65086697){\makebox(0,0)[lt]{\lineheight{1.25}\smash{\begin{tabular}[t]{l}$r_1$\end{tabular}}}}%
    \put(0.65155219,0.46533708){\makebox(0,0)[lt]{\lineheight{1.25}\smash{\begin{tabular}[t]{l}$r_4$\end{tabular}}}}%
    \put(0.46302801,0.22406413){\makebox(0,0)[lt]{\lineheight{1.25}\smash{\begin{tabular}[t]{l}$r_3$\end{tabular}}}}%
  \end{picture}%
\endgroup%

    \end{center}
    \caption{~}
    \label{fig:pushing-1}
\end{figure}

\begin{figure}
    \begin{center}
    \def\svgwidth{9.5cm}
    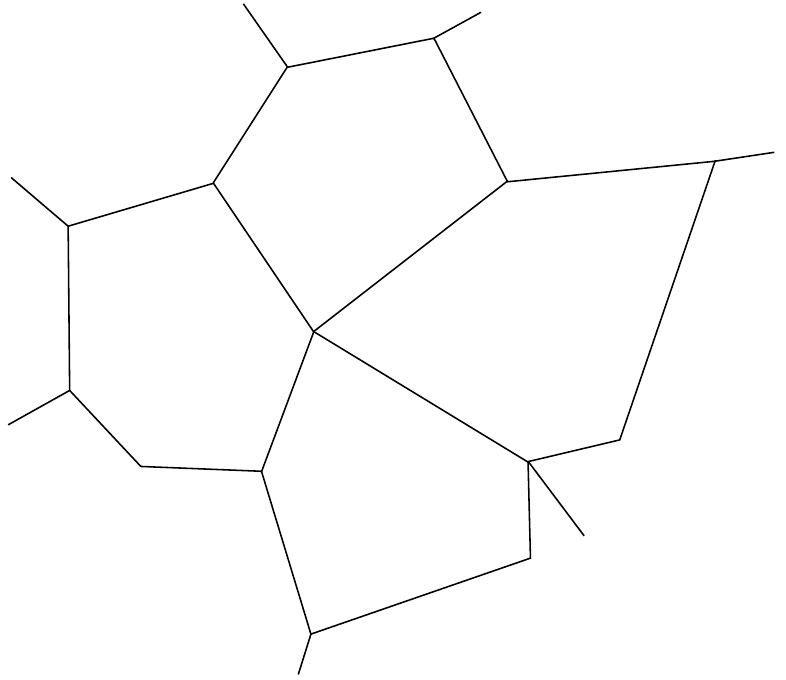
    \end{center}
    \caption{~}
    \label{fig:pushing-2}
\end{figure}

\begin{figure}

    \begin{center}
    \def\svgwidth{9.5cm}
    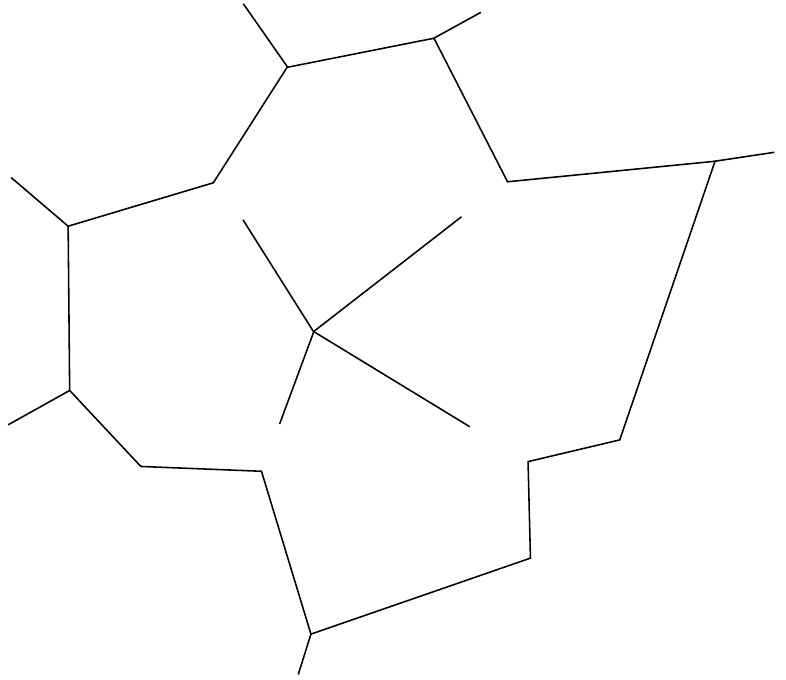
    \end{center}
    \caption{~}
    \label{fig:pushing-3}
\end{figure}

\section{Dehn functions of coabelian subgroups}\label{sec:Dehnfunctions}

The goal of this section is to prove Theorem \ref{thm:Main-Dehn}, generalising \cite[Theorem B]{GerSho-02} by Gersten and Short, and then explain how it implies Theorem \ref{thm:Dehn-hyp} and Corollary \ref{cor:Dehn-automatic}.

Assume that $G$ is a finitely presented group with finite presentation $\mathcal{P}=\left\langle X\mid R\right\rangle$. For words $w(X)$ and $v(X)$ in $X^{\pm 1}$ we write $w(X)=_{F_{X}}v(X)$ if they represent the same elements of the free group in $X$. We call a word $w(X)$ \emph{null-homotopic} if it represents the trivial element of $G$ and define its \emph{area}
\[
    {\rm Area}_{\mathcal{P}}(w):= \min\left\{k\mid w(X)=_{F_X}\prod_{i=1}^k u_i(X)\cdot r_i\cdot u_i(X)^{-1},~ r_i\in R^{\pm 1},~ u_i(X)\in F_X\right\}.
\]

The \emph{Dehn function} of $G$ with respect to $\mathcal{P}$ is defined as
\[
    \delta_{G,\mathcal{P}}(n):=\max\left\{{\rm Area}_{\mathcal{P}}(w)\mid w \mbox{ null-homotopic, } \ell(w)\leq n\right\}.
\]
It measures the maximal number of conjugates of relations needed to detect if a given word in $X^{\pm 1}$ represents the trivial element of $G$. Up to asymptotic equivalence\footnote{Recall that for two functions $f,g:\mathbb{R}_{>0}\to \mathbb{R}_{>0}$ we say that $g$ \emph{asymptotically bounds} $f$ and write $f\preccurlyeq g$ if there is a constant $C\geq 1$ such that $f(n)\leq Cg(Cn+C)+Cn+C$. We say that $f$ and $g$ are \emph{asymptotically equivalent} and write $f\asymp g$ if $f\preccurlyeq g \preccurlyeq f$. } $\delta_{G,\mathcal{P}}$ is independent of the choice of finite presentation and we will thus often write $\delta_G$ instead of $\delta_{G,\mathcal{P}}$ when we are only interested in the asymptotic equivalence class of the Dehn function. 

We can describe any filling of a null-homotopic word $w(X)$ by a van Kampen diagram with boundary word $w$. The area of a van Kampen diagram is its number of 2-cells and ${\rm Area}_{\mathcal{P}}(w)$ is then equal to the number of 2-cells in a van Kampen diagram of minimal area for $w$; see \cite{Bri-02} for the definition of a van Kampen diagram. 

The \emph{radius} ${\rm Radius}_{\mathcal{P}}(M)$ of a van Kampen diagram $M$ is the maximal distance of one of its vertices from its boundary. We call a pair of functions $(f,g):\mathbb{N}\to \mathbb{R}^2$ an \emph{area-radius pair} (or \emph{AR pair}) if for every word $w$ of length $\ell(w)\leq n$ there is a van Kampen diagram of area at most $f(n)$ and of radius at most $g(n)$. By \cite[Proposition 2.1]{GerSho-02} area-radius pairs are independent of the choice of finite presentation of a group up to a suitable asymptotic equivalence of functions; here the equivalence relation on the area functions is given by $\asymp$, while the equivalence relation $\simeq$ on the radius functions is induced by $g_1\lesssim g_2$ if there is a $C>0$ with $g_1(n)\lesssim C g_2(Cn) + C$.

The main idea of the proof of Theorem \ref{thm:Main-Dehn} is that it is possible to control the area of the van Kampen diagrams constructed in Section \ref{sec:Renz-proof} for a null-homotopic word in $N$ in terms of the area and radius of the initial van Kampen diagram for this word in $G$. The main step for this is the following result:
\begin{lemma}\label{lem:van-Kampen-diagrams-quantitative}
    Let $G$ and $N$ be as in Lemma \ref{lem:van-Kampen-diagrams} and let $q>\max\left\{\frac{b^2}{a},a\right\}$. Then there is a constant $A>1$ such that for every closed edge path $p$ in $C_q$ with base point $1$ and every van Kampen diagram $M_p$ for $p$ with $||M_p||=:c>q$ there is a van Kampen diagram $M_p'$ for $p$ with $||M_p'||\leq \max\left\{ c-\frac{a}{2}, q\right\}$ and
    \[
        {\rm Area}(M_p')\leq A\cdot {\rm Area}(M_p).
    \]
\end{lemma}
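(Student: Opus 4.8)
The plan is to revisit the norm-reduction construction in the proof of Lemma \ref{lem:van-Kampen-diagrams} and track, step by step, the number of $2$-cells created. Recall that in that proof one picks an interior vertex $g$ of $M_p$ with $\|g\|=c=\|M_p\|$ maximal, chooses the character $\chi_u$ and generator $t$ with $\chi_u(t)>0$ coming from the finite set $\mathfrak{M}$, and replaces the star of $g$ by a new filling: one inserts a $t$-ring around $g$, fills each sector between consecutive edges of the star with a copy of some van Kampen diagram $M_{\widehat{r}_i}\in\mathfrak{M}_{\chi_u,t}$, and then cancels the adjacent mirror-image pairs of $s_i$-cells (Figures \ref{fig:pushing-1}--\ref{fig:pushing-3}). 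The key point is that all the diagrams $M_{\widehat{r}}$ used lie in the \emph{fixed finite} set $\mathfrak{M}$, so there is a uniform bound, call it $D$, on the number of $2$-cells of any $M_{\widehat{r}}\in\mathfrak{M}$. Hence the local surgery at $g$ replaces a $2$-cell of $M_p$ incident to $g$ by at most a bounded number (roughly $D$ plus the $t$-ring edges) of new $2$-cells; more precisely, each $2$-cell of $M_p$ is either untouched or, if it is incident to $g$, gets replaced by at most $D$ cells, and the $t$-ring contributes a number of $2$-cells bounded by the combined perimeters of those incident cells, which is again bounded by a constant times the number of incident cells.

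Concretely, I would argue as follows. Let $d$ be the number of $2$-cells of $M_p$ incident to $g$. After the surgery, the new diagram $M_p'$ consists of: all $2$-cells of $M_p$ not incident to $g$ (at most ${\rm Area}(M_p)$ of them), plus the $t$-ring (at most $\lambda_1 d$ cells, where $\lambda_1$ depends only on the maximal boundary length of relations in $R$), plus one sector-filling diagram per incident cell (at most $\lambda_2 d$ cells, with $\lambda_2=D\cdot(\text{max relation length})$ or so). Thus ${\rm Area}(M_p')\leq {\rm Area}(M_p) + (\lambda_1+\lambda_2)\,d \leq {\rm Area}(M_p)\cdot\bigl(1+\lambda_1+\lambda_2\bigr)$, since trivially $d\leq{\rm Area}(M_p)$. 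Setting $A:=1+\lambda_1+\lambda_2$ — a constant depending only on the presentation $\langle X\mid R\rangle$ and the finite set $\mathfrak{M}$, not on $p$, $c$, or $q$ — gives the desired bound ${\rm Area}(M_p')\leq A\cdot{\rm Area}(M_p)$. The bound $\|M_p'\|\leq\max\{c-\tfrac a2,q\}$ is exactly what was established in the proof of Lemma \ref{lem:van-Kampen-diagrams}: the Pythagoras estimate there shows every \emph{new} vertex $y$ satisfies $\|y\|<c-\tfrac a2$, while the old untouched vertices had norm $\leq c$; but a vertex of norm exactly $c$ among the old ones — there could be several — is not automatically handled. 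One must note that the surgery removes the specific maximal-norm vertex $g$ and does not raise the norm of any surviving vertex, so after the step the set of vertices of norm $\geq c-\tfrac a2$ loses $g$; iterating over all such vertices in turn (there are finitely many) eventually brings the diagram into $C_{\max\{c-a/2,\,q\}}$.

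The main obstacle I anticipate is making precise the claim that the $t$-ring and sector-fillings contribute only a \emph{bounded multiple of $d$}, rather than something depending on $c$ — this is exactly the subtlety flagged in the footnote in the proof of Lemma \ref{lem:van-Kampen-diagrams}, where it is emphasised that the norm reduction must be by a uniform constant. The resolution is that the $t$-ring around $g$ has combinatorial length equal to the length of the boundary cycle of the star of $g$, which is at most $d$ times the maximal perimeter of a relator, and each rectangle of the $t$-ring is a single $2$-cell coming from a relation of the form $t^{-1}xt=w_{\chi,x,t}\in R$ (guaranteed by the choice of presentation in Remark \ref{rmk:suitable-presentation}); filling each such rectangle therefore costs a bounded number of cells, and there are at most $\lambda_1 d$ of them. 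Since nothing here grows with $c$, the constant $A$ is genuinely uniform. A second, more bookkeeping-type obstacle is verifying that after cancelling the $s_i/s_i^{-1}$ pairs the resulting object is still a legitimate van Kampen diagram for $p$ with the claimed boundary word; but this is precisely the content of passing from Figure \ref{fig:pushing-2} to Figure \ref{fig:pushing-3} in the proof of Lemma \ref{lem:van-Kampen-diagrams}, so I would simply invoke that construction verbatim and only superimpose the cell-count on top of it.
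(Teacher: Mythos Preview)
Your area estimate ${\rm Area}(M_p')\leq (1+\lambda_1+\lambda_2)\,{\rm Area}(M_p)$ is correct for a \emph{single} surgery at one maximal-norm vertex $g$, but the lemma asks for an $M_p'$ with $\|M_p'\|\leq\max\{c-\tfrac{a}{2},q\}$, and---as you yourself note at the end---this requires iterating the surgery over \emph{every} vertex of $M_p$ with norm in the interval $(c-\tfrac{a}{2},c]$. The number $k$ of such vertices is finite but not uniformly bounded: it can be of the same order as ${\rm Area}(M_p)$. If you simply compose your multiplicative bound $k$ times you obtain ${\rm Area}(M_p')\leq (1+\lambda_1+\lambda_2)^{k}\,{\rm Area}(M_p)$, which is useless. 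The crude step $d\leq{\rm Area}(M_p)$ that converts your additive bound into a multiplicative one is exactly what kills the argument under iteration.

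The paper avoids this by never passing to a multiplicative bound at the single-step level. Instead it keeps the additive estimate ${\rm Area}(M_{p,j})\leq{\rm Area}(M_{p,j-1})+A\cdot\deg_{M_{p,j-1}}(v_j)$ and then controls $\sum_j\deg_{M_{p,j-1}}(v_j)$ directly. The key observation---which your outline is missing---is that every vertex $v_j$ removed in the iteration is an \emph{original} vertex of $M_p$ (new vertices all have norm $<c-\tfrac{a}{2}$), and its degree in the intermediate diagram $M_{p,j-1}$ is at most twice its degree in $M_p$. This takes a short case analysis (a surgery at a neighbouring vertex can increase $\deg(v)$ by at most one per original incident $2$-cell, and never through cells created in earlier surgeries). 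Granting this, $\sum_j\deg_{M_{p,j-1}}(v_j)\leq 2\sum_{v}\deg_{M_p}(v)\leq 4B\cdot{\rm Area}(M_p)$ with $B$ the maximal relator length, and the desired linear bound follows. Your plan has all the right ingredients for the single step; what is missing is precisely this degree-doubling control across the iteration.
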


Before proving Lemma \ref{lem:van-Kampen-diagrams-quantitative} we explain how Theorem \ref{thm:Main-Dehn} follows from it.

\begin{proof}[{Proof of Theorem \ref{thm:Main-Dehn}}]
    Since the Euclidean norm on $Q$ is bounded above by a constant times the word metric for our chosen generating set, the map $\pi: G\to Q$ is Lipschitz with respect to the word metric $d_G$ on $G$ and the (Euclidean) norm $||\cdot||$ on $Q$. Thus, there is a constant $B>0$ such that for all elements $g,h\in G$ we have 
    \begin{equation}\label{eqn:Lipschitz-map}
    ||\pi(g)-\pi(h)||\leq B \cdot d_G(g,h).
    \end{equation}

    Let $q>\max\left\{\frac{b^2}{a},a\right\}$ and let $p$ be an arbitrary edge loop of length $n$ in $C_q$ based at $1\in G$. By our assumptions we can choose a van Kampen diagram $M_p$ for $p$ with ${\rm Area}(M_p)\leq f(n)$ and ${\rm Radius}(M_p)\leq g(n)$. In particular, it follows from \eqref{eqn:Lipschitz-map} and our assumption on $p$ that 
    \[
        ||M_p||\leq q + B\cdot {\rm Radius}(M_p)\leq q + B\cdot g(n).
    \]
    
    Inductively applying Lemma \ref{lem:van-Kampen-diagrams-quantitative} at most $\frac{2\cdot B \cdot g(n)}{a}$ times, we deduce that there is a van Kampen diagram $\overline{M}_p$ for $p$ in $C_q$ with
    \[ 
        {\rm Area}(\overline{M}_p)\leq A^{\frac{2\cdot B \cdot g(n)}{a}}\cdot {\rm Area}(M_p)\leq \left(A^{\frac{2B}{a}}\right)^{g(n)} \cdot f(n).
    \]
    
    Since $N$ acts properly, cocompactly and cellularly on $C_q$, we deduce that for every finite presentation of $N$ there is a constant $A'>1$ such that
    \[
        \delta_N(n)\leq A'^{g(n)}f(n).
    \]  
    This completes the proof.\footnote{Note that $\overline{M}_p$ is a van Kampen diagram for $p$ in our chosen presentation for $G$. To conclude we use the well-known and not hard to see fact that the Dehn function of a finitely presented group is asymptotically equivalent to the combinatorial filling function of any simply connected 2-complex on which this group acts properly cocompactly and cellularly. Here the combinatorial filling function of a 2-complex is the maximal combinatorial filling area for an edge loop of length at most $n$, and the combinatorial filling area of an edge loop in this 2-complex is the minimal number of 2-cells that a null-homotopy has to traverse.}
    
\end{proof}

We will now prove Lemma \ref{lem:van-Kampen-diagrams-quantitative}.

\begin{proof}[{Proof of Lemma \ref{lem:van-Kampen-diagrams-quantitative}}]
    The proof of Lemma \ref{lem:van-Kampen-diagrams} provides us with a procedure for replacing a van Kampen diagram $M$ with norm $c>\max\left\{\frac{b^2}{a},a\right\}$ by a new van Kampen diagram $M'$ with one vertex less of norm $> c-\frac{a}{2}$. More precisely, we choose a vertex $v\in M^0$ of maximal norm and replace the star of $v$ in $M$ by a new filling with the same boundary such that all interior vertices of this new filling have norm $\leq c-\frac{a}{2}$. This adds new vertices, but does not alter any of the other vertices of the initial van Kampen diagram $M$. It is clear from the proof of Lemma \ref{lem:van-Kampen-diagrams} that the area of $M'$ only depends on the degree ${\rm deg}_{M}(v)$ of the vertex $v$ in $M$, the chosen presentation for $G$ and the areas of the van Kampen diagrams in the finite set $\mathfrak{M}$ in the proof of Lemma \ref{lem:van-Kampen-diagrams} (which only depend on $G$ and the homomorphism $\pi:G\to Q$). In particular, there is a constant $A>0$ such that
    \[
       {\rm Area}(M')\leq {\rm Area}(M) + A \cdot {\rm deg}_M(v). 
    \]
    
    Let now $p$ be a closed edge loop in $C_q$ and let $M_p$ be a van Kampen diagram for $p$ in $C$ with norm $c:=||M_p||$. We apply the procedure from Lemma \ref{lem:van-Kampen-diagrams} iteratively to construct a sequence $M_{p,i}$, $1\leq i \leq k < |M_p^0|$, of van Kampen diagrams for $p$ such that $M_{p,i}$ is obtained from $M_{p,i-1}$ by replacing the star of a vertex $v_i$ of maximal norm $||v_i||>\max\left\{c-\frac{a}{2},q\right\}$ from $M_{p,i-1}$ by a new filling. Since $p$ is a loop in $C_q$ this vertex is necessarily an interior vertex and, moreover, it can naturally be identified with a vertex $v_i$ in each of the diagrams $M_{p,j}$ with $j\leq i-1$. Observe that for every vertex $v\in M_p^0$ that survives in $M_{p,i-1}$ we have ${\rm deg}_{M_{p,i-1}}(v)\leq 2{\rm deg}_{M_p}(v)$. Indeed, the degree of $v$ only changes when passing from $M_{p,j-1}$ to $M_{p,j}$ if some of its adjacent 2-cells contain the vertex $v_j$. There are two possible cases for such adjacent 2-cells:
    \begin{enumerate}
        \item The 2-cell is a cell that survived from the original diagram $M_p$, in which case it gets replaced by at most two new 2-cells containing $v$ (see Figure \ref{fig:replacing-original-cell}), increasing the degree of $v$ by at most one.
        \item The 2-cell is a cell that was produced when passing from $M_{p,\ell-1}$ to $M_{p,\ell}$ for some $\ell<j$. In this case the vertex $v_j$ must be connected to the vertex $v$ by an edge and therefore there is a second 2-cell adjacent to $v$ which also contains this edge. One observes that the degree of $v$ does not increase when we replace these two 2-cells when passing from $M_{p,j-1}$ to $M_{p,j}$ (see Figure \ref{fig:replacing-new-cell}).
    \end{enumerate}
    Since Case (1) can only occur at most ${\rm deg}_{M_p}(v)$ many times, the assertion that ${\rm deg}_{M_{p,i-1}}(v)\leq 2{\rm deg}_{M_p}(v)$ follows.
    
    \begin{figure}
        \begin{center}
            \def\svgwidth{14cm}
            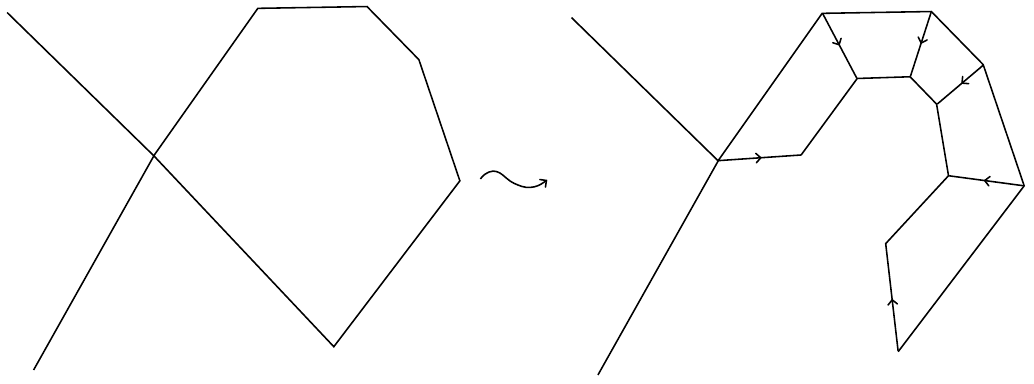
        \end{center}
            \caption{~}
        \label{fig:replacing-original-cell}
    \end{figure}
    
    \begin{figure}
        \begin{center}
            \def\svgwidth{14.5cm}
            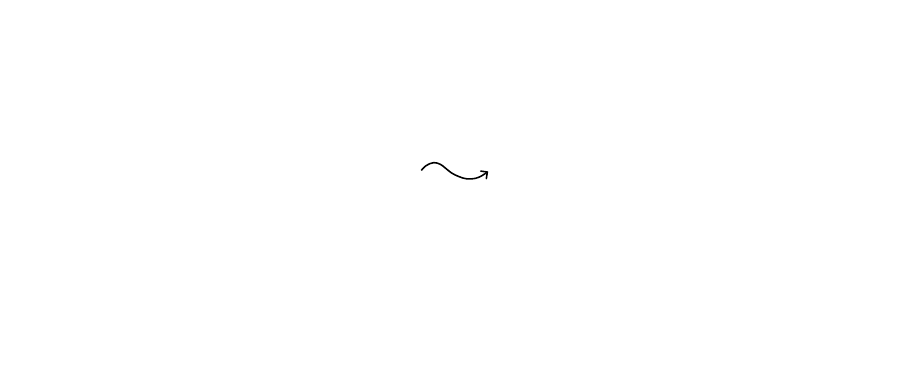
        \end{center}
            \caption{~}
        \label{fig:replacing-new-cell}
    \end{figure}
    
    By construction, the diagram $M_p':=M_{p,k}$ satisfies $||M_p'||\leq \max\left\{ c- \frac{a}{2},q\right\}$ and 
    \begin{equation}\label{eqn:area-degree-estimate-1}
        {\rm Area}(M_p')\leq {\rm Area}(M_p) + 2\cdot A\cdot  \sum_{v {\rm ~interior~ vertex~of~} M_p} {\rm deg}_{M_p}(v).
    \end{equation}
    
    Since $G$ is finitely presented there is a constant $B>0$ such that every 2-cell of a van Kampen diagram in $C$ has at most $B$ edges on its boundary. Moreover, the sum of the degrees of the interior vertices of $M_p$ is bounded above by two times the number of interior edges of $M_p$. We deduce that 
    \begin{equation}\label{eqn:area-degree-estimate-2}
        \sum_{v {\rm ~interior~ vertex~of~} M_p} {\rm deg}_{M_p}(v) \leq 2 \cdot B \cdot {\rm Area}(M_p).
    \end{equation}
    
    Combining the estimates \eqref{eqn:area-degree-estimate-1} and \eqref{eqn:area-degree-estimate-2} we deduce that 
    \[
        {\rm Area}(M_p')\leq \left(1 + 4\cdot A\cdot B\right) {\rm Area}(M_p).
    \]
    
    Since $A$ and $B$ only depend on the group $G$ and our choice of the finite set $\mathfrak{M}$ for a suitable finite presentation of $G$, this completes the proof.
\end{proof}

The other results in the introduction are straight-forward consequences of Theorem \ref{thm:Main-Dehn} using the same arguments as in \cite{GerSho-02}.
\begin{proof}[{Proof of Theorem \ref{thm:Dehn-hyp}}]
    By \cite[Lemma 2.2]{GerSho-02} hyperbolic groups have area-radius pairs of the form $n\mapsto (A n\log(n),B \log(n))$ for suitable constants $A,~B>0$. Theorem \ref{thm:Dehn-hyp} is thus an immediate consequence of Theorem \ref{thm:Main-Dehn}.
\end{proof}

\begin{proof}[{Proof of Corollary \ref{cor:Dehn-automatic}}]
    As explained on page 69 of \cite{GerSho-02}, if $G$ is a synchronously (resp. asynchronously) automatic group, then it has an area-radius pair of the form $(A n^2, B n)$ for constants $A,~B>0$ (resp. $(C^n,D n)$ for constants $C>1$ and $D>0$). The assertion then follows from Theorem \ref{thm:Main-Dehn}.
\end{proof}

\frenchspacing
\bibliography{References}

\providecommand{\bysame}{\leavevmode\hbox to3em{\hrulefill}\thinspace}
\providecommand{\MR}{\relax\ifhmode\unskip\space\fi MR }
\providecommand{\MRhref}[2]{%
  \href{http://www.ams.org/mathscinet-getitem?mr=#1}{#2}
}
\providecommand{\href}[2]{#2}
\begin{thebibliography}{10}

\bibitem{ABILIM-24}
D.~Ascari, F.~Bertolotti, G.~Italiano, C.~Llosa~Isenrich, and M.~Migliorini,
  \emph{Dehn functions of subgroups of direct products of free groups: 3-factor
  case, {$F_{n-1}$}-case, and {Bridson}-{D}ison group}, arXiv preprint
  arXiv:2406.19860 (2021).

\bibitem{BesBra-97}
M.~Bestvina and N.~Brady, \emph{Morse theory and finiteness properties of
  groups}, Invent. Math. \textbf{129} (1997), no.~3, 445--470.

\bibitem{BNS-87}
R.~Bieri, W.~D. Neumann, and R.~Strebel, \emph{A geometric invariant of
  discrete groups}, Invent. Math. \textbf{90} (1987), no.~3, 451--477.

\bibitem{BieRen-88}
R.~Bieri and B.~Renz, \emph{Valuations on free resolutions and higher geometric
  invariants of groups}, Comment. Math. Helv. \textbf{63} (1988), no.~3,
  464--497.

\bibitem{BieStr-book}
R.~Bieri and R.~Strebel, \emph{Geometric invariants for discrete groups},
  Unpublished notes.

\bibitem{Bra-99}
N.~Brady, \emph{Branched coverings of cubical complexes and subgroups of
  hyperbolic groups}, J. London Math. Soc. (2) \textbf{60} (1999), no.~2,
  461--480.

\bibitem{BraKroLloSor-24}
N.~Brady, R.~Kropholler, C.~Llosa~Isenrich, and I.~Soroko, \emph{Explicit
  polynomial bounds on {D}ehn functions of subgroups of hyperbolic groups}, in
  preparation.

\bibitem{BraTra-21}
N.~Brady and H.~C. Tran, \emph{Superexponential {Dehn} functions inside {CAT
  (0)} groups}, arXiv preprint arXiv:2102.13572 (2021), To appear in Israel
  Journal of Mathematics.

\bibitem{Bri-01}
M.~R. Bridson, \emph{On the subgroups of semihyperbolic groups}, Essays on
  geometry and related topics, {V}ol. 1, 2, Monogr. Enseign. Math., vol.~38,
  Enseignement Math., Geneva, 2001, pp.~85--111.

\bibitem{Bri-02}
M.~R. Bridson, \emph{The geometry of the word problem}, Invitations to geometry and
  topology, Oxf. Grad. Texts Math., vol.~7, Oxford Univ. Press, Oxford, 2002,
  pp.~29--91.

\bibitem{BriHae-99}
M.~R. Bridson and A.~Haefliger, \emph{Metric spaces of non-positive curvature},
  Grundlehren der mathematischen Wissenschaften [Fundamental Principles of
  Mathematical Sciences], vol. 319, Springer-Verlag, Berlin, 1999.

\bibitem{Bro-82}
K.~S. Brown, \emph{Cohomology of groups}, Graduate Texts in Mathematics,
  vol.~87, Springer-Verlag, New York-Berlin, 1982.

\bibitem{Dis-08}
W.~Dison, \emph{Isoperimetric functions for subdirect products and
  {B}estvina-{B}rady groups}, Ph.D. thesis, Imperial College London, 2008.

\bibitem{GerSho-02}
S.~Gersten and H.~Short, \emph{Some isoperimetric inequalities for kernels of
  free extensions}, Geom. Dedicata \textbf{92} (2002), 63--72, Dedicated to
  John Stallings on the occasion of his 65th birthday.

\bibitem{Gro-87}
M.~Gromov, \emph{Hyperbolic groups}, Essays in group theory, Springer, 1987,
  pp.~75--263.

\bibitem{IMM-23}
G.~Italiano, B.~Martelli, and M.~Migliorini, \emph{Hyperbolic 5-manifolds that
  fiber over {$S^1$}}, Invent. Math. \textbf{231} (2023), no.~1, 1--38.

\bibitem{Kro-21}
R.~Kropholler, \emph{Hyperbolic groups with finitely presented subgroups not of
  type {$F_3$}}, Geom. Dedicata \textbf{213} (2021), 589--619.

\bibitem{KroLlo-23-II}
R.~Kropholler and C.~Llosa~Isenrich, \emph{{Dehn} functions of coabelian
  subgroups of direct products of groups}, J. Lond. Math. Soc. (2) \textbf{107}
  (2023), no.~1, 123--152.

\bibitem{KroLlo-23}
R.~Kropholler and C.~Llosa~Isenrich, \emph{Finitely presented kernels of homomorphisms from hyperbolic
  groups onto free abelian groups}, arXiv preprint arXiv:2309.03794 (2023), to
  appear in International Mathematics Research Notices.

\bibitem{LIMP-21}
C.~Llosa~Isenrich, B.~Martelli, and P.~Py, \emph{Hyperbolic groups containing
  subgroups of type $\mathscr{F}_3$ not $\mathscr{F}_4$}, J. Differential Geom.
  \textbf{127} (2024), no.~3, 1121--1147.

\bibitem{LloPy-23}
C.~Llosa~Isenrich and P.~Py, \emph{Groups with exotic finiteness properties
  from complex {M}orse theory}, arXiv preprint arXiv:2310.04073 (2023).

\bibitem{LloPy-22}
C.~Llosa~Isenrich and P.~Py, \emph{Subgroups of hyperbolic groups, finiteness properties and
  complex hyperbolic lattices}, Invent. Math. \textbf{235} (2024), no.~1,
  233--254.

\bibitem{LloTes-20}
C~Llosa~Isenrich and R.~Tessera, \emph{Residually free groups do not admit a
  uniform polynomial isoperimetric function}, Proc. Amer. Math. Soc.
  \textbf{148} (2020), no.~10, 4203--4212.

\bibitem{Lod-18}
Y.~Lodha, \emph{A hyperbolic group with a finitely presented subgroup that is
  not of type {$FP_3$}}, Geometric and cohomological group theory, London Math.
  Soc. Lecture Note Ser., vol. 444, Cambridge Univ. Press, Cambridge, 2018,
  pp.~67--81.

\bibitem{Ren-88}
B.~Renz, \emph{Geometrische {I}nvarianten und {E}ndlichkeitseigenschaften von
  {G}ruppen}, Ph.D. thesis, Johann Wolfgang Goethe-Universit\"at Frankfurt am
  Main, 1988.

\bibitem{Ren-89}
B.~Renz, \emph{Geometric invariants and {HNN}-extensions}, Group theory
  (Singapore, 1987) (1989), 465--484.

\end{thebibliography}
\bibliographystyle{amsplain}

\end{document}